\numberwithin{theorem}{section}
\newcommand{\TheTitle}{Fast Nonoverlapping Block Jacobi Method for the Dual Rudin--Osher--Fatemi Model} 
\newcommand{\TheAuthors}{C.-O. Lee and J. Park}
\title{{\TheTitle}\thanks{
\funding{This work was supported by NRF grant funded by MSIT (NRF-2017R1A2B4011627).}}}
\author{
  Chang-Ock Lee\thanks{Department of Mathematical Sciences, KAIST, Daejeon 34141, Korea \newline
    (\email{colee@kaist.edu}, \email{jongho.park@kaist.ac.kr}).}
  \and
  Jongho Park\footnotemark[2]
}
\newcommand\p{\mathbf{p}}
\newcommand\q{\mathbf{q}}
\renewcommand\r{\mathbf{r}}
\newcommand\tOmega{\tilde{\Omega}}
\newcommand\N{\mathcal{N}}
\renewcommand\S{\mathcal{S}}
\newcommand\bS{\bar{\mathcal{S}}}
\renewcommand\P{\mathcal{P}}
\newcommand\T{\mathcal{T}}
\renewcommand\div{\mathrm{div}}
\newcommand\supp{\mathrm{supp}\,}
\DeclareMathOperator*{\argmin}{\arg\min}
\begin{document}

\maketitle

% Abstract
\begin{abstract}
We consider nonoverlapping domain decomposition methods for the Rudin--Osher--Fatemi~(ROF) model, which is one of the standard models in mathematical image processing.
The image domain is partitioned into rectangular subdomains and local problems in subdomains are solved in parallel.
Local problems can adopt existing state-of-the-art solvers for the ROF model.
We show that the nonoverlapping relaxed block Jacobi method for a dual formulation of the ROF model has the $O(1/n)$ convergence rate of the energy functional, where~$n$ is the number of iterations.
Moreover, by exploiting the forward-backward splitting structure of the method, we propose an accelerated version whose convergence rate is~$O(1/n^2)$.
The proposed method converges faster than existing domain decomposition methods both theoretically and practically, while the main computational cost of each iteration remains the same.
We also provide the dependence of the convergence rates of the block Jacobi methods on the image size and the number of subdomains.
Numerical results for comparisons with existing methods are presented.
\end{abstract}

% Keywords
\begin{keywords}
Domain decomposition method, Block Jacobi method, Rudin--Osher--Fatemi model, Convergence rate, FISTA, Parallel computation
\end{keywords}

% AMS classification
\begin{AMS}
65N55, 65Y05, 65B99, 65K10, 68U10
\end{AMS}

% Section: Introduction
\section{Introduction}
\label{Sec:Introduction}

As large-scale images have become available in these days, it takes a long time to process such images and demands to reduce the time, especially in real time, are required.
To accomplish such goal, there has been arisen the necessity of mathematical methods to make efficient use of distributed memory computers.
In this perspective, we consider domain decomposition methods~(DDMs), which have been already used successfully for elliptic partial differential equations~\cite{QV:1999,TW:2005}.

In DDMs, a large-scale problem defined on a domain $\Omega$ is splitted into smaller ones called local problems defined on subdomains $\left\{ \Omega_s \right\}_{s=1}^{\N}$ satisfying $\bar{\Omega} = \bigcup_{s=1}^{\N} \bar{\Omega}_s$.
We assign one local problem per processor so that all local problems can be solved in parallel in distributed memory computers.
A solution of the full-dimension problem is obtained by the assembly of solutions of local problems.
Thus, DDMs are suitable for distributed memory computers.
Block Jacobi and Gauss--Seidel methods are typical examples of DDMs.

% The ROF model
In this paper, we consider block Jacobi methods for the Rudin--Osher--Fatemi~(ROF) model~\cite{ROF:1992}, which is one of the standard models in variational image processing given by
\begin{equation}
\label{ROF}
\min_{u \in BV(\Omega)} \left\{ \frac{\alpha}{2} \int_{\Omega} (u-f)^2 \,dx + TV(u) \right\},
\end{equation}
where $\alpha$ is a positive parameter, $f$ is a corrupted image, $TV(u)$ is the total variation of $u$, and $BV(\Omega)$ is the space of functions of bounded variation on an image domain~$\Omega \subset \mathbb{R}^2$.
There have been numerous approaches to solve~\cref{ROF}; see~\cite{BT:2009,Chambolle:2004,CP:2011,GO:2009,WYYZ:2008,WT:2010} for instance.

% DDMs for the ROF model
There have been several remarkable preceding researches on DDMs for variational image processing.
Subspace correction methods for~\cref{ROF} were proposed in~\cite{FLS:2010,FS:2009}, and various techniques to deal with local problems were considered in~\cite{LOS:2013,LLWY:2016}.
In~\cite{HL:2013}, subspace correction methods for the case of mixed $L^1$/$L^2$-fidelity were considered.
However, it was shown in~\cite{LN:2017} that such subspace correction methods for~\cref{ROF} may not converge to a minimizer.
To ensure convergence to a minimizer, in~\cite{CTWY:2015,HL:2015,LN:2017}, subspace correction methods for the Fenchel--Rockafellar dual formulation of~\cref{ROF} given by
\begin{equation}
\label{dual_ROF}
\min_{\p \in C_0^1 (\Omega ; \mathbb{R}^2)} \frac{1}{2} \int_{\Omega} (\div \p + \alpha f )^2 \,dx \hspace{0.5cm}
\textrm{subject to } |\p(x)| \leq 1 \hspace{0.2cm}\forall x \in \Omega
\end{equation}
were proposed, where $C_0^1 (\Omega ; \mathbb{R}^2)$ is the set of continuously differentiable vector fields $\p$:~$\Omega \rightarrow \mathbb{R}^2$ with compact support.
Recently, interface-based domain decomposition methods for total variation minimization were proposed~\cite{LNP:2019,LPP:2019}.

% Coordinate descent = Gauss--Seidel
Over the past decade, block coordinate descent type algorithms for nonsmooth convex optimization have been successfully developed~\cite{BT:2013,Tseng:2001}.
They solve a minimization problem by successively minimizing the functional with respect to one of the coordinate blocks.
In the context of domain decomposition methods, they correspond to nonoverlapping block Gauss--Seidel methods.
Indeed, we observe that convergence rate analyses for block coordinate descent methods given in~\cite{CP:2015,ST:2016} can be directly applied to the nonoverlapping block Gauss--Seidel method for~\cref{dual_ROF} in~\cite{HL:2015}.
However, to the best of our knowledge, there are no available analysis of the convergence rate for the nonoverlapping block Jacobi method for~\cref{dual_ROF}.

% In this paper..
In this paper, we prove the $O(1/n)$ convergence rate of the nonoverlapping relaxed block Jacobi method for~\cref{dual_ROF}, which was proposed in~\cite{HL:2015}.
We note that it was shown in~\cite{CTWY:2015} that the overlapping counterpart is $O(1/n)$ convergent and the authors gave a remark that their proof cannot be directly applied to the nonoverlapping case.
In addition to the analysis of the convergence rate, we propose an accelerated nonoverlapping block Jacobi method for~\cref{dual_ROF} which has the $O(1/n^2)$ convergence rate.
First, we modify the relaxed block Jacobi method so that it has the forward-backward splitting structure~\cite{CW:2005}.
We call such a modified method as \textit{pre-relaxed} block Jacobi method.
Thanks to its forward-backward structure, we obtain an $O(1/n^2)$ convergent algorithm by adding the momentum technique introduced in~\cite{BT:2009}.
Numerical results ensure faster convergence of the proposed method compared to the existing ones.

% Paper organization
The rest of the paper is organized as follows.
Basic settings for the paper are presented in \cref{Sec:Preliminaries}.
We analyze the convergence rate of the relaxed block Jacobi method and propose its variants including an~$O(1/n^2)$ convergent method in \cref{Sec:Jacobi}.
We consider how to deal with local problems of the proposed methods in \cref{Sec:Local}.
We compare the proposed methods to the existing ones by numerical experiments in \cref{Sec:Numerical}.
We conclude the paper with remarks in \cref{Sec:Conclusion}.

% Section: Preliminaries
\section{Preliminaries}
\label{Sec:Preliminaries}

% Table: Notations
\begin{table}
\centering
\small
\begin{tabular}{| c | c | c |} \hline
notation & description & section \\ \hline 
$\Omega$ 	& image domain of size $M \times N$ & 2.1 \\
$V$ 		& a collection of functions: $\Omega \rightarrow \mathbb{R}$ & 2.1 \\
$W$			& a collection of functions: $\Omega \rightarrow \mathbb{R}^2$ & 2.1 \\
$C$			& $\{ \p \in W : |\p_{ij}| \leq 1 \quad \forall (i,j) \in W \}$ & 2.1 \\
$\nabla$	& discrete gradient operator & 2.1 \\
$\div$		& discrete divergence operator, $\div = - \nabla^*$ & 2.1 \\
$F$	& $F(\p) = \frac{1}{2} \| \div \p + \alpha f \|_2^2$, dual energy functional & 2.1 \\
$\chi_C$ & characteristic function of $C \subset W$ & 2.1 \\
$\N$		& $M_s \times N_s$, number of subdomains in the decomposition $\{ \Omega_s \}_{s=1}^{\N}$ & 2.2 \\
$N_c$		& number of colors on the decomposition $\{ \Omega_s \}_{s=1}^{\N}$ & 2.2 \\
$S_k$		& union of all subdomains with color $k$ & 2.2 \\
$W_k$		& a collection of functions: $S_k \rightarrow \mathbb{R}^2$ & 2.2 \\
$R_k$		& $R_k \p = \p|_{S_k}$, restriction operator: $W \rightarrow W_k$ & 2.2 \\
$C_k$		& $R_k C$ & 2.2 \\
$F_k$ & $F_k (\p_k ; \r) = F(R_k^* \p_k + (I-R_k^* R_k) \r )$, local energy functional & 2.2 \\
$D_k$ & Bregman distance associated with $F_k (\cdot ; \r)$ & 2.2 \\
$D$ 		& $D(\p, \q) = \sum_{k=1}^{N_c} D_k (R_k \p, R_k \q)$ & 2.2 \\
$\S_k$ 	& $\S_k (\p) = \argmin_{\p_k \in W_k} \{ F_k (\p_k ; \p) + \chi_{C_k} (\p_k) \}$ & 2.2 \\
$\bS_k$ 	& $\bS_k (\p) = R_k^* \S_k (\p) + (I-R_k^* R_k) \p$ & 2.2 \\
$\P_k$		& $\P_k (\p) = \argmin_{\p_k \in W_k} \{ F_k (N_c \p_k - (N_c - 1)R_k \p ; \p) + \chi_{C_k} (\p_k) \}$ & 3.2 \\
\hline
\end{tabular}
\caption{List of notations used in \cref{Sec:Preliminaries,Sec:Jacobi}}
\label{Table:notations}
\end{table}

In this section, we review the standard finite difference framework for the ROF model.
Then, we formulate the Fenchel--Rockafellar dual problem of the discrete ROF model.
Finally, a nonoverlapping domain decomposition setting for the dual problem is presented.
A list of notations used in \cref{Sec:Preliminaries,Sec:Jacobi} is given in \cref{Table:notations}.

% Subsection: Discrete setting
\subsection{Discrete setting}
We assume that the grayscale image domain~$\Omega$ is composed of $M \times N$ pixels; i.e.,
\begin{equation*}
\Omega = \left\{ (i,j) : 1\leq i \leq M, \hspace{0.1cm} 1 \leq j \leq N \right\}.
\end{equation*}

Let $V$ and $W$ be the collections of functions from~$\Omega$ into~$\mathbb{R}$ and $\mathbb{R}^2$, respectively.
Both spaces are equipped with the usual Euclidean inner products $\left< \cdot , \cdot \right>$ and their induced norms $\| \cdot \|_2$.
In addition, let $C$ be the convex subset of $W$ given by
\begin{equation*}
C = \left\{ \p \in W : |\p_{ij}| \leq 1 \hspace{0.2cm}\forall (i,j) \in \Omega \right\},
\end{equation*}
where $|\p_{ij}| = (|p_{ij}^1|^2 + |p_{ij}^2|^2 )^{1/2}$ for $\p = (p^1 , p^2)$.

The discrete gradient operator $\nabla$:~$V \rightarrow W$ is defined by
\begin{eqnarray*}
(\nabla u)^1_{ij} &=& \left\{\begin{array}{cl} u_{i+1,j} - u_{ij}  & \textrm{if} \quad i=1,\dots,M-1,   \\     0   & \textrm{if} \quad i=M, \end{array} \right. \\
(\nabla u)^2_{ij} &=& \left\{\begin{array}{cl} u_{i,j+1} - u_{ij}   & \textrm{if} \quad j=1,\dots,N-1, \\     0 & \textrm{if} \quad j=N. \end{array} \right.
\end{eqnarray*}
We define the discrete divergence operator $\div$:~$W \rightarrow V$ as the minus adjoint of~$\nabla$, i.e.,
\begin{eqnarray*}
(\div \p)_{ij} &=& \begin{cases} p_{ij}^1 & \textrm{if} \quad i=1, \\ p_{ij}^1 - p_{i-1,j}^1 & \textrm{if} \quad i=2,\dots,M-1, \\ -p_{i-1,j}^1 & \textrm{if} \quad i=M \end{cases}\\
&& \hspace{0.1cm} + \begin{cases} p_{ij}^2 & \textrm{if} \quad j=1, \\ p_{ij}^2 - p_{i,j-1}^2 & \textrm{if} \quad j=2,\dots,N-1, \\ -p_{i,j-1}^2 & \textrm{if} \quad j=N.\end{cases}
\end{eqnarray*}

With the operators defined above, a discrete version of~\cref{ROF} is stated as
\begin{equation}
\label{d_ROF}
\min_{u \in V} \left\{ \frac{\alpha}{2} \| u - f \|_2^2 + \| \nabla u \|_1 \right\},
\end{equation}
where $\| \nabla u \|_1$ is the discrete total variation of $u$ defined as the 1-norm of $\nabla u$:
\begin{equation*}
\| \nabla u \|_1 = \sum_{(i,j) \in \Omega} |(\nabla u)_{ij}|.
\end{equation*}

Now, we consider the Fenchel--Rockafellar dual problem of~\cref{d_ROF}.
We define the energy functional $F$:~$W \rightarrow \mathbb{R}$ by
\begin{equation*}
F(\p) = \frac{1}{2} \| \div \p + \alpha f \|_2^2 , \hspace{0.5cm} \p \in W.
\end{equation*}
It is well-known that a solution~$u^*$ of~\cref{d_ROF} can be recovered from a solution~$\p^*$ of
\begin{equation}
\label{d_dual_ROF}
\min_{\p \in W} \left\{ F(\p) + \chi_C (\p) \right\}
\end{equation}
by a simple algebraic formula
\begin{equation*}
u^* = f + \frac{1}{\alpha}\div \p^* .
\end{equation*}
Here $\chi_C$:~$W \rightarrow \bar{\mathbb{R}}$ is the characteristic function of $C \subset W$ defined as
\begin{equation*}
\chi_C (\p) = \begin{cases} 0 & \textrm{if} \quad \p \in C, \\ \infty & \textrm{if} \quad \p \not\in C.\end{cases}
\end{equation*}
We call~\cref{d_dual_ROF} the Fenchel--Rockafellar dual problem of~\cref{d_ROF}.
For more details, readers may refer~\cite{Chambolle:2004,Rockafellar:2015}.

% Subsection: Domain decomposition setting
\subsection{Domain decomposition setting}
First, we partition the image domain $\Omega$ into $\N = M_s \times N_s$ nonoverlapping rectangular subdomains $\left\{ \Omega_s \right\}_{s=1}^{\N}$.
All subdomains can be classified into~$N_c$ classes~(colors) by the usual coloring technique~(see section~2.5.1 of~\cite{TW:2005}); local problems on subdomains in the same class are solved independently.
Details of the coloring technique will be given in \cref{Sec:Numerical}.
Let $S_k$ be the union of all subdomains with color~$k$ for $k=1,\dots,N_c$.
Then, we have
\begin{equation*}
\Omega = \bigcup_{k=1}^{N_c} S_k, \hspace{0.5cm} S_k \cap S_j = \emptyset \quad \forall k,j.
\end{equation*}

For each $k=1,\dots,N_c$, we define the local function space $W_k$ as the collection of functions from~$S_k$ to~$\mathbb{R}^2$.
Also, we define the restriction operator $R_k$:~$W \rightarrow W_k$ as
\begin{equation}
\label{R_k}
R_k \p = \p |_{S_k}, \hspace{0.5cm} \p \in W.
\end{equation}
Then, its adjoint $R_k^*$:~$W_k \rightarrow W$ becomes the natural extension operator:
\begin{equation*}
(R_k^* \p_k )_{ij} = \begin{cases} (\p_k)_{ij} & \textrm{if} \quad (i,j) \in S_k , \\ \mathbf{0} & \textrm{if} \quad (i,j) \not\in S_k , \end{cases}
\hspace{0.5cm} \p_k \in W_k.
\end{equation*}
Note that $R_k^* R_k$ is the orthogonal projection from $W$ onto $W_k$ and
\begin{equation*}
\sum_{k=1}^{N_c} R_k^* R_k = I.
\end{equation*}
We clearly have $W = \bigoplus_{k=1}^{N_c} R_k^* W_k$.
Similarly, by defining $C_k = R_k C$, we have $C = \bigoplus_{k=1}^{N_c} R_k^* C_k$ and
\begin{equation*}
\chi_C (\p) =\sum_{k=1}^{N_c} \chi_{C_k} (R_k \p), \hspace{0.5cm} \p \in W.
\end{equation*}

Given $\r \in W$, the local energy functional $F_k$:~$W_k \rightarrow \mathbb{R}$ is defined as
\begin{equation*}
F_k (\p_k ; \r) = F(R_k^* \p_k + (I-R_k^* R_k) \r), \hspace{0.5cm} \p_k \in W_k.
\end{equation*}
The Bregman distance~\cite{Bregman:1967} associated with $F_k (\cdot ; \r)$ is denoted by~$D_k$, i.e.,
\begin{equation}
\label{Bregman1}
D_k (\p_k , \q_k) = F_k (\p_k ; \r) - F_k (\q_k ; \r) -  \left< \nabla F_k (\q_k ; \r) , \p_k - \q_k \right>, \hspace{0.5cm} \p_k, \q_k \in W_k,
\end{equation}
and one can observe that it is independent of $\r$.
Indeed, we have
\begin{equation}
\label{Bregman2}
D_k (\p_k , \q_k) =\frac{1}{2} \| \div R_k^* (\p_k - \q_k) \|_2^2 .
\end{equation}
We define the function $D$:~$W \times W \rightarrow \mathbb{R}$ as
\begin{equation}
\label{D_dist}
D (\p , \q) = \sum_{k=1}^{N_c} D_k (R_k \p , R_k \q), \hspace{0.5cm} \p, \q \in W.
\end{equation}
We provide the descent lemma corresponding to~$D$.

% Lemma: Lipschitz properties of D_k and D
\begin{lemma}
\label{Lem:Lip}
For any $\p, \q \in W$, we have
\begin{equation*}
F(\p) \leq F(\q) + \left< \nabla F(\q), \p - \q \right> + N_c D (\p, \q).
\end{equation*}
\end{lemma}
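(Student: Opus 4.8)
The plan is to exploit the fact that $F$ is a convex quadratic, so that the claimed descent inequality is really a comparison between an exact second-order remainder and the Bregman-type quantity $N_c D(\p,\q)$. First I would record the exact Taylor identity. Writing $F(\p) = \frac{1}{2}\langle \div^* \div\, \p, \p\rangle + \langle \div^*(\alpha f), \p\rangle + \frac{1}{2}\|\alpha f\|_2^2$, the Hessian of $F$ is the fixed operator $\div^* \div$, so for a quadratic the second-order expansion is exact:
\begin{equation*}
F(\p) = F(\q) + \left< \nabla F(\q), \p - \q \right> + \tfrac{1}{2}\left< \div^* \div (\p-\q), \p - \q \right> = F(\q) + \left< \nabla F(\q), \p - \q \right> + \tfrac{1}{2}\| \div (\p - \q) \|_2^2 .
\end{equation*}
Thus the lemma reduces entirely to establishing the single inequality $\tfrac{1}{2}\|\div(\p-\q)\|_2^2 \le N_c D(\p,\q)$, with no further appeal to the structure of $F$.

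Next I would unfold the right-hand side using the identities supplied in the preliminaries. By \cref{Bregman2} together with $R_k \p - R_k \q = R_k(\p-\q)$ and the definition \cref{D_dist},
\begin{equation*}
N_c D(\p,\q) = N_c \sum_{k=1}^{N_c} \tfrac{1}{2}\| \div R_k^* R_k (\p-\q) \|_2^2 .
\end{equation*}
Setting $\mathbf{w} = \p - \q$, the task is therefore to bound $\|\div \mathbf{w}\|_2^2$ by $N_c \sum_{k} \|\div R_k^* R_k \mathbf{w}\|_2^2$.

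The key step is the partition identity $\sum_{k=1}^{N_c} R_k^* R_k = I$, which gives $\div \mathbf{w} = \sum_{k=1}^{N_c} \div R_k^* R_k \mathbf{w}$, expressing $\div \mathbf{w}$ as a sum of $N_c$ vectors. Applying the elementary Cauchy--Schwarz bound $\|\sum_{k=1}^{N_c} a_k\|_2^2 \le N_c \sum_{k=1}^{N_c} \|a_k\|_2^2$ with $a_k = \div R_k^* R_k \mathbf{w}$ yields
\begin{equation*}
\tfrac{1}{2}\|\div \mathbf{w}\|_2^2 \le N_c \sum_{k=1}^{N_c} \tfrac{1}{2}\| \div R_k^* R_k \mathbf{w} \|_2^2 = N_c D(\p,\q),
\end{equation*}
which combined with the exact Taylor identity proves the claim. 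I do not expect a genuine obstacle here, since the argument is short; the only point requiring care is recognizing \emph{where} the factor $N_c$ enters. It is precisely the price paid in the Cauchy--Schwarz step for splitting $\div \mathbf{w}$ into $N_c$ color contributions, and it is sharp in the sense that no smaller constant works for the worst case in which all $\div R_k^* R_k \mathbf{w}$ align. Matching this constant to the $N_c$ on the right is what makes the inequality close exactly.
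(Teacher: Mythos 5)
Your proposal is correct and follows essentially the same route as the paper: the exact quadratic expansion reduces the claim to $\tfrac{1}{2}\|\div(\p-\q)\|_2^2 \le N_c D(\p,\q)$, which is then obtained by writing $\p-\q = \sum_{k=1}^{N_c} R_k^* R_k(\p-\q)$ and applying the bound $\|\sum_k a_k\|_2^2 \le N_c\sum_k\|a_k\|_2^2$, exactly as in the paper's one-line calculation. The only difference is presentational: you make explicit the Taylor identity and the Cauchy--Schwarz step that the paper compresses into ``by direct calculation.''
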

\begin{proof}
By direct calculation, we have
\begin{align*}
F(\p) - F(\q) &- \left< \nabla F(\q), \p - \q \right> = \frac{1}{2} \| \div (\p - \q) \|_2^2 \\
&= \frac{1}{2} \left\| \div \left( \sum_{k=1}^{N_c}R_k^* R_k (\p - \q) \right) \right\|_2^2 \\
&\leq N_c \sum_{k=1}^{N_c}  \frac{1}{2} \| \div R_k^* R_k (\p - \q) \|_2^2 = N_c D(\p, \q).
\end{align*}
\end{proof}

Finally, we define two operators $\S_k$:~$W \rightarrow W_k$ and $\bS_k$:~$W \rightarrow W$ by
\begin{equation}
\label{S_k}
\S_k (\p) = \argmin_{\p_k \in W_k} \left\{ F_k (\p_k ; \p) + \chi_{C_k} (\p_k) \right\} , \hspace{0.5cm} \p \in W,
\end{equation}
and
\begin{equation*}
\bS_k (\p) = R_k^* \S_k (\p) + (I - R_k^* R_k ) \p, \hspace{0.5cm} \p \in W.
\end{equation*}
As the local energy functional $F_k$ is not strongly convex, the minimization problem in~\cref{S_k} may admit nonunique minimizers.
In this case, we take $\S_k (\p)$ as \textit{any} one among them.
By the optimality of $\S_k$, we have the following lemma.

% Lemma: Optimality of S_k
\begin{lemma}
\label{Lem:opt}
For any $\p_k \in C_k$ and $\q \in C$, we have
\begin{equation*}
F_k (\p_k; \q) - F(\bS_k (\q)) \geq  \frac{1}{2} \| \div R_k^* (\p_k - R_k \bS_k (\q)) \|_2^2 .
\end{equation*}
\end{lemma}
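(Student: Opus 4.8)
The plan is to reduce the claimed inequality to the first-order optimality (variational inequality) characterization of the constrained minimizer $\S_k(\q)$. The first step is bookkeeping on the operator $\bS_k$. Writing $\bS_k(\q) = R_k^* \S_k(\q) + (I - R_k^* R_k)\q$ and using $R_k R_k^* = I$ on $W_k$ together with $R_k(I - R_k^* R_k) = 0$, I would verify the two identities
\begin{equation*}
R_k \bS_k(\q) = \S_k(\q), \qquad F(\bS_k(\q)) = F_k(\S_k(\q); \q),
\end{equation*}
the second being immediate from the definition $F_k(\p_k;\r) = F(R_k^*\p_k + (I-R_k^*R_k)\r)$ evaluated at $\p_k = \S_k(\q)$ and $\r = \q$. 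With these in hand, the right-hand side $\frac{1}{2}\|\div R_k^*(\p_k - R_k\bS_k(\q))\|_2^2$ becomes $\frac{1}{2}\|\div R_k^*(\p_k - \S_k(\q))\|_2^2$, which by \cref{Bregman2} is exactly the Bregman distance $D_k(\p_k, \S_k(\q))$.

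Next I would unfold this Bregman distance using its definition \cref{Bregman1} anchored at the base point $\S_k(\q)$,
\begin{equation*}
D_k(\p_k, \S_k(\q)) = F_k(\p_k; \q) - F_k(\S_k(\q); \q) - \langle \nabla F_k(\S_k(\q);\q), \p_k - \S_k(\q)\rangle,
\end{equation*}
which is legitimate since $D_k$ is independent of the reference point. Substituting this together with $F(\bS_k(\q)) = F_k(\S_k(\q);\q)$ into the statement, the matching terms $F_k(\p_k;\q) - F_k(\S_k(\q);\q)$ cancel from both sides, and the entire inequality collapses to
\begin{equation*}
\langle \nabla F_k(\S_k(\q);\q), \p_k - \S_k(\q)\rangle \geq 0.
\end{equation*}

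Finally I would invoke the optimality of $\S_k(\q)$. Since $\S_k(\q)$ minimizes the differentiable convex function $F_k(\cdot;\q)$ over the closed convex set $C_k$ by its definition in \cref{S_k}, and since $\p_k \in C_k$ by hypothesis, the standard variational inequality for constrained minimizers of a smooth convex function yields precisely the displayed inequality, closing the argument. The part demanding the most care is the bookkeeping in the first step, rather than a genuine obstacle: one must confirm that $R_k\bS_k(\q)$ reproduces $\S_k(\q)$ and that no contribution from the complementary block $(I - R_k^* R_k)\q$ survives, so that the right-hand side is genuinely a Bregman distance anchored at $\S_k(\q)$. Once these identities are secured the result is immediate; in particular it is an exact restatement of first-order optimality, and no strong convexity of $F_k$ is invoked, consistent with the remark preceding \cref{S_k} that minimizers may be nonunique.
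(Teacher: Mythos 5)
Your proof is correct and follows the same route as the paper, which simply states that the inequality ``is clear from the optimality condition of $\S_k(\q)$'' (citing Lemma~3.3 of~\cite{CTWY:2015}); you have merely written out in full the computation the paper leaves implicit, namely that the right-hand side is the Bregman distance $D_k(\p_k,\S_k(\q))$ and the inequality reduces to the variational inequality $\langle \nabla F_k(\S_k(\q);\q),\,\p_k-\S_k(\q)\rangle\ge 0$. The bookkeeping identities $R_k\bS_k(\q)=\S_k(\q)$ and $F(\bS_k(\q))=F_k(\S_k(\q);\q)$ are verified correctly, so no gap remains.
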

\begin{proof}
It is clear from the optimality condition of $\S_k (\q)$~(see the proof of Lemma~3.3 in~\cite{CTWY:2015}).
\end{proof}

\Cref{Lem:opt} shall be useful in the convergence analysis of block Jacobi methods in the next section.
We note that it was also used in the case of overlapping domain decomposition~\cite{CTWY:2015}.

% Section: Block Jacobi methods
\section{Block Jacobi methods}
\label{Sec:Jacobi}
In this section, we analyze the convergence rate of the relaxed block Jacobi method~(Algorithm~2 in~\cite{HL:2015}) for~\cref{d_dual_ROF}.
Then, by observing a resemblance between the relaxed block Jacobi method and the forward-backward splitting~\cite{BT:2009,CW:2005}, we propose a variant of the method called the pre-relaxed block Jacobi method, which has the exact forward-backward splitting structure.
Based on the pre-relaxed block Jacobi method, we also propose an accelerated block Jacobi method which has the~$O(1/n^2)$ convergence rate.

% Subsection: Relaxed Block Jacobi method
\subsection{Relaxed Block Jacobi method}
The relaxed block Jacobi method for~\cref{d_dual_ROF} is presented in the following, which was first proposed in~\cite{HL:2015}.

% Algorithm (Relaxed Block Jacobi Method)
\begin{algorithm}[]
\caption{Relaxed Block Jacobi Method}
\begin{algorithmic}[]
\label{Alg:RJ}
\STATE Let $\p^{(0)} \in C$.
\FOR {$n=0,1,2,\dots$}
\STATE $\displaystyle \p^{(n+1)} = \frac{1}{N_c} \sum_{k=1}^{N_c} \bS_k (\p^{(n)})$
\ENDFOR
\end{algorithmic}
\end{algorithm}

Note that
\begin{equation*}
\p^{(n+1)} = \frac{1}{N_c} \sum_{k=1}^{N_c} \bS_k (\p^{(n)}) = \left( 1 - \frac{1}{N_c} \right) \p^{(n)} + \frac{1}{N_c} \sum_{k=1}^{N_c} R_k^* \S_k (\p^{(n)})
\end{equation*}
in \cref{Alg:RJ}.
That is,~$\p^{(n+1)}$ is obtained by relaxation of~$\sum_{k=1}^{N_c} R_k^* \S_k (\p^{(n)})$ with~$\p^{(n)}$, and it is the reason why we call \cref{Alg:RJ} the relaxed block Jacobi method.

In~\cite{HL:2015}, the convergence of \cref{Alg:RJ} was shown without the convergence rate.
It is summarized in the following proposition.

% Proposition: Convergence analysis of Relaxed Block Jacobi Method by Hinterm\"{u}ller and Langer
\begin{proposition}
\label{Prop:HL_RJ}
Let $\left\{ \p^{(n)} \right\}$ be the sequence generated by \cref{Alg:RJ}.
Then, it satisfies the followings: \\
\emph{(i)} The sequence $\left\{ F(\p^{(n)}) \right\}$ is decreasing, so that it converges. \\
\emph{(ii)} The sequence $\left\{ \p^{(n)} \right\}$ is bounded and has a limit point which is a solution of~\cref{d_dual_ROF}.
\end{proposition}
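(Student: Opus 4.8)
The plan is to combine the convexity of $F$ with the optimality of the local solvers $\S_k$, and then to extract convergence of a subsequence from a summable sufficient-decrease estimate. First I would record that every iterate lies in the bounded set $C$: since $\p^{(0)} \in C$ and each $\bS_k(\p^{(n)})$ is feasible (its color-$k$ block is $\S_k(\p^{(n)}) \in C_k$ while the remaining blocks coincide with those of $\p^{(n)} \in C$), the convex combination $\p^{(n+1)} = \frac{1}{N_c}\sum_k \bS_k(\p^{(n)})$ stays in $C$ by induction; this already settles the boundedness asserted in (ii). For the monotonicity in (i), convexity of $F$ (Jensen's inequality) yields $F(\p^{(n+1)}) \le \frac{1}{N_c}\sum_k F(\bS_k(\p^{(n)}))$, and optimality of $\S_k$ gives $F(\bS_k(\p^{(n)})) = F_k(\S_k(\p^{(n)}); \p^{(n)}) \le F_k(R_k\p^{(n)}; \p^{(n)}) = F(\p^{(n)})$, since $R_k\p^{(n)} \in C_k$ is feasible for the local problem. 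Hence $F(\p^{(n+1)}) \le F(\p^{(n)})$, and as $F \ge 0$ the decreasing sequence $\{F(\p^{(n)})\}$ converges.

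Next I would quantify the decrease. Applying \cref{Lem:opt} with $\p_k = R_k\p^{(n)}$ and $\q = \p^{(n)}$, and using $R_k\bS_k(\p^{(n)}) = \S_k(\p^{(n)})$ together with the fact that $\p^{(n)}$ and $\bS_k(\p^{(n)})$ differ only on $S_k$, gives
\[
F(\p^{(n)}) - F(\bS_k(\p^{(n)})) \ge \tfrac{1}{2}\|\div(\p^{(n)} - \bS_k(\p^{(n)}))\|_2^2 .
\]
Averaging over $k$ and combining with the Jensen bound above produces
\[
F(\p^{(n)}) - F(\p^{(n+1)}) \ge \frac{1}{2N_c}\sum_{k=1}^{N_c}\|\div(\p^{(n)} - \bS_k(\p^{(n)}))\|_2^2 .
\]
Summing in $n$ and telescoping the left-hand side, which is finite because $\{F(\p^{(n)})\}$ converges, I conclude that $\|\div(\p^{(n)} - \bS_k(\p^{(n)}))\|_2 \to 0$ for every $k$.

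Finally, by boundedness there is a subsequence $\p^{(n_j)} \to \p^* \in C$, and I would verify the variational inequality $\langle \nabla F(\p^*), \q - \p^*\rangle \ge 0$ for all $\q \in C$, which characterizes solutions of \cref{d_dual_ROF}. The optimality condition for $\S_k(\p^{(n_j)})$ reads $\langle \nabla F(\bS_k(\p^{(n_j)})), R_k^* R_k(\q - \bS_k(\p^{(n_j)}))\rangle \ge 0$ for all $\q \in C$. The main obstacle is precisely the nonuniqueness emphasized after \cref{S_k}: because $F_k$ is not strongly convex, $\S_k$ need not be single-valued or continuous, so neither $\S_k(\p^{(n_j)})$ nor $\bS_k(\p^{(n_j)})$ need converge to anything tied to $\p^*$. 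The observation that resolves this is that $\nabla F(\p) = -\nabla(\div\p + \alpha f)$ depends on $\p$ only through $\div\p$; since $\div\bS_k(\p^{(n_j)}) = \div\p^{(n_j)} + \div(\bS_k(\p^{(n_j)}) - \p^{(n_j)}) \to \div\p^*$, we obtain $\nabla F(\bS_k(\p^{(n_j)})) \to \nabla F(\p^*)$. The only term not obviously convergent is $\langle \nabla F(\bS_k(\p^{(n_j)})), \bS_k(\p^{(n_j)}) - \p^{(n_j)}\rangle$, which I would rewrite as $\langle \div\bS_k(\p^{(n_j)}) + \alpha f, \div(\bS_k(\p^{(n_j)}) - \p^{(n_j)})\rangle$ and send to zero using the summability conclusion above. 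Passing to the limit then yields $\langle \nabla F(\p^*), R_k^* R_k(\q - \p^*)\rangle \ge 0$ for each $k$, and summing over $k$ via $\sum_k R_k^* R_k = I$ gives the global optimality condition, so $\p^*$ solves \cref{d_dual_ROF}.
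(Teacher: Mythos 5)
Your proof is correct, but note that the paper itself offers no proof of this proposition: it is stated as a summary of results established in the cited reference \cite{HL:2015}, so there is no in-paper argument to compare against line by line. Your self-contained derivation is nonetheless fully consistent with the paper's toolkit, and in fact the first half of it (feasibility of $\bS_k(\p^{(n)})$ by the pointwise nature of $C$, Jensen's inequality for the convex combination, and \cref{Lem:opt} with $\p_k = R_k\p^{(n)}$ to get the sufficient-decrease estimate) is essentially the special case $\p = \q$ of the computation the paper performs later in \cref{Lem:RJ_FB}; the resulting inequality $F(\p^{(j)}) - F(\p^{(j+1)}) \ge D(\p^{(j)},\p^{(j+1)})$ appears explicitly in the proof of \cref{Thm:RJ}. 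The part of your argument that goes beyond what the paper records anywhere is the limit-point analysis, and you correctly identified and resolved the one genuine difficulty there: since $F_k$ is only convex, $\S_k$ is neither single-valued nor continuous, so one cannot extract a limit of $\bS_k(\p^{(n_j)})$ directly; your observation that $\nabla F$ factors through $\div$, combined with $\div(\p^{(n)} - \bS_k(\p^{(n)})) \to 0$ from the telescoped decrease, is exactly what makes the passage to the limit in the local variational inequalities legitimate, and summing over $k$ via $\sum_k R_k^* R_k = I$ correctly assembles the global optimality condition. One could alternatively deduce part of the conclusion from \cref{Thm:RJ} (the $O(1/n)$ rate already forces $F(\p^{(n)}) \to F(\p^*)$, whence any limit point is a minimizer by continuity of $F$ and closedness of $C$), which would shortcut your subsequence argument, but monotonicity in (i) would still require your Jensen-plus-optimality step. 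I see no gap.
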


First, we prove that the convergence rate of \cref{Alg:RJ} is~$O(1/n)$.
The following lemma is a main ingredient for our proof.

% Lemma: Quasi-Forward-Backward structure of the relaxed Jacobi method
\begin{lemma}
\label{Lem:RJ_FB}
For any $\p, \q \in C$, we have
\begin{equation*}
\frac{1}{N_c}F(\p) + \left( 1 - \frac{1}{N_c} \right) F(\q) - F\left( \frac{1}{N_c}\sum_{k=1}^{N_c} \bS_k (\q) \right)
\geq D\left( \p , \frac{1}{N_c}\sum_{k=1}^{N_c} \bS_k (\q) \right) - D(\p , \q).
\end{equation*}
\end{lemma}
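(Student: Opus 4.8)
The plan is to abbreviate $\q^+ = \frac{1}{N_c}\sum_{k=1}^{N_c}\bS_k(\q)$ and to rewrite the claimed inequality in the equivalent form
\[
\frac{1}{N_c}F(\p) + \left(1 - \frac{1}{N_c}\right)F(\q) + D(\p,\q) \geq F(\q^+) + D(\p,\q^+).
\]
The central objects will be the ``mixed'' vector fields $\m_k = R_k^* R_k \p + (I - R_k^* R_k)\q$, which agree with $\p$ on $S_k$ and with $\q$ elsewhere, so that $F(\m_k) = F_k(R_k\p;\q)$ while $\bS_k(\q)$ minimizes $F_k(\cdot;\q)+\chi_{C_k}$ over the same block. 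Two elementary facts drive the argument: since $\sum_k R_k^*R_k = I$ one has $\frac{1}{N_c}\sum_k \m_k = \frac{1}{N_c}\p + (1-\frac{1}{N_c})\q$, and since the ranges of the $R_k^*$ are disjoint one has $R_k\q^+ = (1-\frac{1}{N_c})R_k\q + \frac{1}{N_c}\S_k(\q)$. I would then sandwich the averaged energy $\frac{1}{N_c}\sum_k F(\m_k)$ between a lower and an upper bound.

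For the lower bound I would apply \Cref{Lem:opt} blockwise with $\p_k = R_k\p$, which is legitimate because $\p \in C$ forces $R_k\p \in C_k$. Using $R_k\bS_k(\q) = \S_k(\q)$ this gives $F(\m_k) - F(\bS_k(\q)) \geq D_k(R_k\p, \S_k(\q))$ for each $k$. Averaging over $k$ and invoking convexity of $F$ (that is, $F(\q^+) \leq \frac{1}{N_c}\sum_k F(\bS_k(\q))$) yields
\[
\frac{1}{N_c}\sum_{k=1}^{N_c} F(\m_k) \geq F(\q^+) + \frac{1}{N_c}\sum_{k=1}^{N_c} D_k(R_k\p, \S_k(\q)).
\]

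For the upper bound I would exploit that $F$ is an affine map composed with half a squared norm. Writing $\div\m_k + \alpha f = (\div\q + \alpha f) + d_k$ with $d_k = \div R_k^* R_k(\p-\q)$, a direct expansion gives $F(\m_k) = F(\q) + \langle \div\q + \alpha f, d_k\rangle + D_k(R_k\p, R_k\q)$. Summing and using $\sum_k d_k = \div(\p-\q)$ collapses the linear terms into $\langle\nabla F(\q), \p-\q\rangle = F(\p) - F(\q) - \frac{1}{2}\|\div(\p-\q)\|_2^2$; discarding the last nonpositive term produces the sharp estimate
\[
\frac{1}{N_c}\sum_{k=1}^{N_c} F(\m_k) \leq \frac{1}{N_c}F(\p) + \left(1-\frac{1}{N_c}\right)F(\q) + \frac{1}{N_c}D(\p,\q).
\]
Finally, blockwise convexity of the quadratics $\frac{1}{2}\|\div R_k^*(R_k\p - \cdot)\|_2^2$ evaluated at the convex combination $R_k\q^+ = (1-\frac{1}{N_c})R_k\q + \frac{1}{N_c}\S_k(\q)$ gives $D(\p,\q^+) \leq (1-\frac{1}{N_c})D(\p,\q) + \frac{1}{N_c}\sum_k D_k(R_k\p,\S_k(\q))$. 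Chaining the lower and upper sandwich bounds and then substituting this last inequality cancels the $\sum_k D_k(R_k\p,\S_k(\q))$ terms and reproduces the rearranged claim.

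I expect the sharp coefficient $\frac{1}{N_c}$ in front of $D(\p,\q)$ in the upper bound to be the crux. The coarse descent estimate of \Cref{Lem:Lip} carries the pessimistic factor $N_c$ on the Bregman term and, if used here, only yields $D(\p,\q)$ with a coefficient near $2$, which is too weak to close the argument. The gain comes from the exact quadratic identity above, where each $\m_k - \q$ is supported on the single color set $S_k$, so the cross terms collapse through $\sum_k R_k^* R_k = I$ rather than being crudely bounded; retaining the negative term $-\frac{1}{2N_c}\|\div(\p-\q)\|_2^2$ only strengthens the inequality.
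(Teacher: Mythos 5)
Your proposal is correct and follows essentially the same route as the paper's proof: the blockwise application of \cref{Lem:opt} with $\p_k = R_k\p$, the convexity of $F$ over the average of the $\bS_k(\q)$, the convex-combination identity $R_k\q^+ = (1-\tfrac{1}{N_c})R_k\q + \tfrac{1}{N_c}\S_k(\q)$ fed into the convexity of $\tfrac{1}{2}\|\div R_k^*(\cdot)\|_2^2$, and the exact quadratic expansion of $\tfrac{1}{N_c}\sum_k F_k(R_k\p;\q)$ are precisely the four inequalities \cref{RJ_FB1,RJ_FB2,RJ_FB4,RJ_FB5} that the paper sums. Your "sandwich" organization and your use of the exact quadratic identity (discarding $-\tfrac{1}{2N_c}\|\div(\p-\q)\|_2^2$) in place of the paper's appeal to convexity of $F$ are only cosmetic differences.
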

\begin{proof}
For the sake of convenience, let 
\begin{equation*}
\S (\q) = \frac{1}{N_c} \sum_{k=1}^{N_c} \bS_k (\q).
\end{equation*}
By the convexity of $F$, we have
\begin{equation}
\label{RJ_FB1}
\frac{1}{N_c}\sum_{k=1}^{N_c} F(\bS_k (\q)) \geq F (\S (\q)).
\end{equation}
Invoking \cref{Lem:opt} with $\p_k = R_k \p$ and summing over $k=1, \dots, N_c$ give
\begin{equation}
\label{RJ_FB2}
\frac{1}{N_c}\sum_{k=1}^{N_c} \left[ F_k (R_k \p; \q) - F(\bS_k (\q)) \right] \geq  \frac{1}{N_c} \sum_{k=1}^{N_c}\frac{1}{2} \| \div R_k^* R_k (\p - \bS_k (\q)) \|_2^2 .
\end{equation}
Also, by the relation
\begin{equation*}
R_k^* R_k \left( \p - \S (\q) \right) = \left( 1 - \frac{1}{N_c} \right) R_k^* R_k (\p - \q) + \frac{1}{N_c}R_k^* R_k (\p -\bS_k (\q ))
\end{equation*}
and the convexity of the functional $\frac{1}{2}\| \div \cdot \|_2^2$, we obtain
\begin{equation}
\label{RJ_FB3}
\frac{1}{2N_c} \| \div R_k^* R_k (\p - \bS_k (\q)) \|_2^2 \geq \frac{1}{2} \| \div R_k^* R_k \left( \p - \S (\q) \right) \|_2^2
 - \frac{1}{2} \left( 1 - \frac{1}{N_c} \right) \| \div R_k^* R_k (\p - \q ) \|_2^2 .
 \end{equation}
Summation of~\cref{RJ_FB3} over $k=1,\dots,N_c$ yields
\begin{equation}
\label{RJ_FB4}
\frac{1}{N_c} \sum_{k=1}^{N_c}\frac{1}{2} \| \div R_k^* R_k (\p - \bS_k (\q)) \|_2^2 \geq D(\p, \S (\q)) - \left( 1 - \frac{1}{N_c} \right) D(\p , \q).
\end{equation}
On the other hand, we have
\begin{equation} \begin{split}
\label{RJ_FB5}
F(\q) &- \frac{1}{N_c} \sum_{k=1}^{N_c} F_k (R_k \p ; \q) = \frac{1}{N_c} \sum_{k=1}^{N_c} \left[ F(\q) - F (R_k^* R_k \p + (I-R_k^* R_k) \q) \right] \\
&= \frac{1}{N_c} \sum_{k=1}^{N_c} \left[  - \left< \nabla F(\q), R_k^* R_k (\p - \q) \right> - \frac{1}{2} \| \div R_k^* R_k (\p -\q ) \|_2^2 \right] \\
&= - \frac{1}{N_c} \left< \nabla F (\q) , \p - \q \right> - \frac{1}{N_c} D(\p , \q) \\
&\geq - \frac{1}{N_c} (F(\p) -  F(\q)) - \frac{1}{N_c} D(\p, \q),
\end{split} \end{equation}
where the last inequality is due to the convexity of $F$.
Summation of~\cref{RJ_FB1,RJ_FB2,RJ_FB4,RJ_FB5} yields the desired result.
\end{proof}

Now, we are ready to prove $O(1/n)$ convergence of \cref{Alg:RJ}.
The proof presented in this paper uses the same argument as the proof of Theorem~3.1 in~\cite{BT:2009}.

% Theorem: Convergence rate of Relaxed Block Jacobi Method
\begin{theorem}
\label{Thm:RJ}
Let $\left\{ \p^{(n)} \right\}$ be the sequence generated by \cref{Alg:RJ} and $\p^*$ be a solution of~\cref{d_dual_ROF}.
Then, for any $n \geq 1$, we have
\begin{equation*}
F(\p^{(n)}) - F(\p^*) \leq \frac{N_c D(\p^* , \p^{(0)}) + (N_c - 1) \left( F(\p^{(0)}) - F(\p^*) \right) }{n}.
\end{equation*}
\end{theorem}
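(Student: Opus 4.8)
The plan is to establish the bound by combining the single estimate of \cref{Lem:RJ_FB} with the monotonicity of the energy, and then packaging everything into a discrete Lyapunov functional that is nonincreasing along the iteration. Throughout, I write $a_n = F(\p^{(n)}) - F(\p^*) \geq 0$, and I recall that the update rule of \cref{Alg:RJ} reads $\p^{(n+1)} = \frac{1}{N_c}\sum_{k=1}^{N_c}\bS_k(\p^{(n)})$.

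First I would apply \cref{Lem:RJ_FB} with $\p = \p^*$ and $\q = \p^{(n)}$. After subtracting $F(\p^*)$ from the relevant terms, the left-hand side collapses to $\left(1 - \frac{1}{N_c}\right)a_n - a_{n+1}$, so the lemma yields the one-step inequality
\[
D(\p^*, \p^{(n+1)}) - D(\p^*, \p^{(n)}) \leq \left(1 - \frac{1}{N_c}\right)a_n - a_{n+1}.
\]
Next I would record the monotonicity $a_{n+1} \leq a_n$, which is exactly \cref{Prop:HL_RJ}(i) and also follows from \cref{Lem:RJ_FB} with $\p = \q = \p^{(n)}$, since $D(\p^{(n)}, \p^{(n)}) = 0$.

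The heart of the argument is to introduce the functional
\[
\Phi_n = (n + N_c - 1)\, a_n + N_c\, D(\p^*, \p^{(n)}),
\]
and to show that it does not increase. Forming $\Phi_{n+1} - \Phi_n$, substituting the one-step inequality (multiplied by $N_c$) for the Bregman increment, and collecting terms, the weights are chosen precisely so that the $a_n$ and $a_{n+1}$ contributions reduce to $n(a_{n+1} - a_n)$, which is $\leq 0$ by monotonicity. Hence $\Phi_n \leq \Phi_0 = (N_c - 1)a_0 + N_c D(\p^*, \p^{(0)})$. Since $N_c \geq 1$ and $D \geq 0$, we have $n\, a_n \leq (n + N_c - 1)a_n \leq \Phi_n$, and dividing by $n$ gives the claimed rate.

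The main obstacle is discovering the correct Lyapunov functional, in particular the weights $n + N_c - 1$ on the energy gap and $N_c$ on the Bregman term. Unlike the unweighted forward-backward setting of \cite{BT:2009}, the factor $\left(1 - \frac{1}{N_c}\right)$ appearing in \cref{Lem:RJ_FB} destroys the naive telescoping $a_{n+1} + D(\p^*, \p^{(n+1)}) \leq a_n + D(\p^*, \p^{(n)})$ and forces these specific constants; once they are identified, the verification that $\Phi_{n+1} - \Phi_n \leq n(a_{n+1} - a_n)$ is a short algebraic cancellation.
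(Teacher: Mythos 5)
Your proposal is correct and rests on exactly the same two instantiations of \cref{Lem:RJ_FB} that the paper uses ($\p = \p^*$, $\q = \p^{(n)}$ for the one-step estimate, and $\p = \q = \p^{(n)}$ for monotonicity); your Lyapunov functional $\Phi_n = (n+N_c-1)a_n + N_c D(\p^*,\p^{(n)})$ is a repackaging of the paper's argument, since telescoping $\Phi_{j+1}-\Phi_j \leq j(a_{j+1}-a_j)$ reproduces the paper's step of multiplying the monotonicity inequality by $j$ and summing. The algebra checks out ($\Phi_{n+1}-\Phi_n \leq n(a_{n+1}-a_n) \leq 0$ and $n\,a_n \leq \Phi_n \leq \Phi_0$ yield precisely the stated bound), so this is essentially the paper's proof in a cleaner bookkeeping.
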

\begin{proof}
Using \cref{Lem:RJ_FB} with $\p = \p^*$ and $\q = \p^{(j)}$, we get
\begin{equation*}
\frac{1}{N_c} F(\p^*) + \left( 1 - \frac{1}{N_c} \right) F(\p^{(j)}) - F(\p^{(j+1)}) \geq D(\p^* , \p^{(j+1)}) - D(\p^* , \p^{(j)}).
\end{equation*}
Summation of this inequality over $j=0,1,\dots,n-1$ yields
\begin{equation}
\label{RJ1}
\frac{n}{N_c} F(\p^*) +\left( 1 - \frac{1}{N_c} \right) \left( F(\p^{(0)})- F(\p^{(n)}) \right) - \frac{1}{N_c} \sum_{j=1}^n F(\p^{(j)})
\geq D(\p^* , \p^{(n)}) - D(\p^* , \p^{(0)}).
\end{equation}
Also, by \cref{Lem:RJ_FB} with $\p = \q = \p^{(j)}$, we obtain
\begin{equation*}
F(\p^{(j)}) - F(\p^{(j+1)}) \geq D(\p^{(j)} , \p^{(j+1)}).
\end{equation*}
Multiplying this inequality by $j$ and summing over $j=1,\dots,n-1$ yields
\begin{equation}
\label{RJ2}
-n F(\p^{(n)}) + \sum_{j=1}^{n} F(\p^{(j)}) \geq \sum_{j=1}^{n-1} D(\p^{(j)}, \p^{(j+1)}).
\end{equation}
Adding~\cref{RJ1} times $N_c$ and~\cref{RJ2}, we obtain
\begin{equation*} \begin{split}
&n \left(F(\p^*) - F(\p^{(n)})\right) + (N_c - 1) \left( F(\p^{(0)}) - F(\p^{(n)}) \right) \\
&\hspace{0.5cm} \geq N_c\left( D(\p^* , \p^{(n)}) - D(\p^* , \p^{(0)}) \right) + \sum_{j=1}^{n-1} D(\p^{(j)}, \p^{(j+1)}).
\end{split} \end{equation*}
That is,
\begin{align*}
n \left(F(\p^*) - F(\p^{(n)})\right) &\geq (N_c - 1) \left( F(\p^{(n)}) - F(\p^{(0)}) \right) \\
&\quad+ N_c \left( D(\p^* , \p^{(n)}) - D(\p^* , \p^{(0)}) \right)
+ \sum_{j=1}^{n-1} D(\p^{(j)}, \p^{(j+1)}) \\
&\geq -(N_c - 1) \left( F(\p^{(0)}) - F(\p^*) \right) - N_c D(\p^* , \p^{(0)}).
\end{align*}
Therefore, we conclude that
\begin{equation*}
F(\p^{(n)}) - F(\p^*) \leq \frac{N_c D(\p^* , \p^{(0)}) + (N_c - 1) \left( F(\p^{(0)}) - F(\p^*) \right) }{n}.
\end{equation*}
\end{proof}

% Remark: Remark on CTWY
\begin{remark}
\label{Rem:CTWY}
In~\cite{CTWY:2015}, the relaxed block Jacobi method with overlapping domain decomposition has $O(1/n)$ convergence rate with the constant that tends to $\infty$ as the overlapping size tends to 0.
Therefore, the proof in~\cite{CTWY:2015} does not guarantee the convergence of the method in the nonoverlapping case.
\end{remark}

By the definition of~$D(\p^* , \p^{(0)})$ in~\cref{D_dist}, the estimate given in \cref{Thm:RJ} depends on the number of subdomains~$\N$. The following lemma describes such dependency in detail.

% Lemma: Dependency on \N
\begin{lemma}
\label{Lem:D}
Assume that $\Omega$ with~$M \times N$ pixels is partitioned into $\N = M_s \times N_s$ nonoverlapping rectangular subdomains of the same size.
For any $\p \in W$, we have
\begin{equation*}
\sum_{k=1}^{N_c} \| \div R_k^* R_k \p \|_2^2 \leq \| \div \p \|_2^2 + c_1 \| \p \|_{\infty}^2 ,
\end{equation*}
where $c_1$ is given by
\begin{equation}
\label{c1}
c_1 = 7\left( MN_s + M_s N - \frac{11}{7}M_s N_s \right).
\end{equation}
\end{lemma}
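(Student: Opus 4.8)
The plan is to exploit two structural facts: the colors partition $\Omega$, so $\sum_{k=1}^{N_c} R_k^* R_k = I$ and hence $\sum_{k=1}^{N_c} \div R_k^* R_k \p = \div \p$ pixelwise; and $\div$ is a nearest-neighbour finite-difference operator, so replacing $\p$ by its restriction $R_k^* R_k \p$ to a color block changes the divergence only near the interfaces between subdomains. Writing $a_k := (\div R_k^* R_k \p)_{ij}$ for a fixed pixel $(i,j)$, I would start from the identity $\sum_k a_k = (\div \p)_{ij}$, which gives the exact decomposition
\[
\sum_{k=1}^{N_c} \| \div R_k^* R_k \p \|_2^2 = \| \div \p \|_2^2 + \sum_{(i,j) \in \Omega} \Delta_{ij}, \qquad \Delta_{ij} := \sum_{k=1}^{N_c} a_k^2 - \Big( \sum_{k=1}^{N_c} a_k \Big)^2 .
\]
The whole problem then reduces to bounding $\sum_{ij} \Delta_{ij}$ by $c_1 \| \p \|_\infty^2$.

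The divergence at $(i,j)$ only involves the three source pixels $(i,j)$, $(i-1,j)$, $(i,j-1)$, carrying the quantities $\alpha = p^1_{ij} + p^2_{ij}$, $\beta = -p^1_{i-1,j}$, $\gamma = -p^2_{i,j-1}$. If all three lie in the same color (the generic interior case), a single $a_k$ carries $\alpha + \beta + \gamma$ and $\Delta_{ij} = 0$. Only when the stencil straddles a subdomain interface do the contributions split among two or three distinct colors; here the coloring is essential, since edge- and corner-adjacent subdomains receive distinct colors, so distinct subdomains within one stencil give distinct indices $k$. For each such interface pixel I would use $\Delta_{ij} \leq \sum_k a_k^2$ together with $|p^1_{ij}|, |p^2_{ij}|, |p^1_{i-1,j}|, |p^2_{i,j-1}| \leq \| \p \|_\infty$ and the elementary bounds $(a+b)^2 \leq 2a^2 + 2b^2$ and $(a+b+c)^2 \leq 3(a^2+b^2+c^2)$. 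The worst configuration, where the split is across a single interface so that $\sum_k a_k^2 = (\alpha+\gamma)^2 + \beta^2$, yields exactly $7\| \p \|_\infty^2$, which is the origin of the factor $7$; a corner pixel, where all three colors differ and $\sum_k a_k^2 = \alpha^2 + \beta^2 + \gamma^2$, gives the smaller bound $4\| \p \|_\infty^2$.

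It then remains to count interface pixels by type. With $M_s \times N_s$ equal rectangular subdomains there are $M_s - 1$ internal vertical and $N_s - 1$ internal horizontal interfaces, producing $(M_s-1)N$ pixels whose $i$-stencil crosses an interface and $(N_s-1)M$ whose $j$-stencil does, with $(M_s-1)(N_s-1)$ corner pixels lying in both families. Assigning the weight $7\| \p \|_\infty^2$ to pure edge pixels and $4\| \p \|_\infty^2$ to corner pixels and summing produces a constant of the shape $7MN_s + 7M_sN - (\cdot)M_sN_s$ plus lower-order terms in $M, N, M_s, N_s$.

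The hard part will be pinning down the exact constant $c_1 = 7MN_s + 7M_sN - 11 M_sN_s$. Its clean form — free of any $M$, $N$, $M_s$, or $N_s$ lower-order terms — signals that the truncation of the divergence stencil along $\partial \Omega$, where $p^1_{0j}$, $p^1_{Mj}$, $p^2_{i0}$, $p^2_{iN}$ drop out and some of $\alpha, \beta, \gamma$ vanish, must be tracked exactly, since it both removes interface pixels sitting on the outer boundary and lowers the per-pixel bound there. Getting the coefficient of $M_sN_s$ precisely right therefore demands a meticulous case analysis over edge, corner, and boundary-adjacent pixels rather than the crude weighting above; this bookkeeping, not the analytic per-pixel estimates, is the real obstacle.
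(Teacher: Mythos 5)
Your decomposition and per-pixel estimates are sound and in fact coincide, term by term, with the paper's: the quantities you collect in $\Delta_{ij} \leq \sum_k a_k^2$ at an interface pixel are exactly the squared terms the paper groups subdomain-by-subdomain over the regions $E_1,\dots,E_5$ of $\tOmega_s \setminus \Omega_s^{\circ}$ (a top-row pixel of a subdomain contributes $(p_{ij}^1+p_{ij}^2-p_{i,j-1}^2)^2 \leq 6\|\p\|_\infty^2$ to its own subdomain and $(p_{i-1,j}^1)^2 \leq \|\p\|_\infty^2$ to the one above it, whence your $7$). Up to that point you have the paper's proof reorganized pixel-by-pixel rather than subdomain-by-subdomain, and your route has the minor advantage of not needing the reduction of the color sum to a subdomain sum.

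The gap is that you stop short of the constant, and your plan for finishing points in the wrong direction. The clean form of $c_1$ does not come from tracking the stencil truncation along $\partial\Omega$ exactly --- it comes from not tracking it at all. The paper charges \emph{every} subdomain, boundary or not, the generic-interior amount $\bigl[6(\tfrac{N}{N_s}-1)+\tfrac{N}{N_s}+6(\tfrac{M}{M_s}-1)+\tfrac{M}{M_s}+1\bigr]\|\p\|_\infty^2 = \bigl[7(\tfrac{M}{M_s}+\tfrac{N}{N_s})-11\bigr]\|\p\|_\infty^2$, where the $-11$ is merely $-(6+6-1)$ coming from the cardinalities $\tfrac{N}{N_s}-1,\ \tfrac{N}{N_s},\ \tfrac{M}{M_s}-1,\ \tfrac{M}{M_s},\ 1$ of the five interface regions of a single subdomain, and then multiplies by $\N = M_s N_s$; since boundary truncation only deletes terms, this uniform over-count remains a valid upper bound and no boundary case analysis is required. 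By contrast, if you complete your per-pixel count with the weights $7/7/4$ over the $(M_s-1)N$, $(N_s-1)M$, and $(M_s-1)(N_s-1)$ interface pixels, you get $7(M_s-1)N+7(N_s-1)M-10(M_s-1)(N_s-1)$, which neither simplifies to $c_1$ nor is always bounded by it (for $M=N=100$, $M_s=N_s=50$ your total is $44590$ while $c_1 = 42500$), so the lemma with the stated constant does not follow from your counting without regrouping the terms by subdomain as the paper does.
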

\begin{proof}
For $\p \in W$, we define $R^s \p = \p |_{\Omega_s}$ for~$s=1,\dots, \N$.
Then, we clearly have
\begin{equation*}
\sum_{k=1}^{N_c} \| \div R_k^* R_k \p \|_2^2 = \sum_{s=1}^{\N} \| \div (R^s)^* R^s \p \|_2^2 .
\end{equation*}
For $(i,j) \in \Omega$, by the definition of $\div$, one requires $\p_{ij}$, $\p_{i-1,j}$, and $\p_{i,j-1}$ to compute $(\div \p)_{ij}$.
Let $\tOmega_s$ and $\Omega_s^{\circ}$ be subsets of $\Omega$ defined by
\begin{equation*}
\tOmega_s = \bigcup_{(i,j) \in \Omega_s} \left\{ (i,j), (i+1,j), (i,j+1) \in \Omega \right\}
\end{equation*}
and
\begin{equation*}
\Omega_s^{\circ} = \left\{ (i,j) \in \Omega : (i,j), (i-1,j), (i,j-1) \in \Omega_s \right\},
\end{equation*}
respectively; see \cref{Fig:subdomain} for graphical description.
Since
\begin{equation*}
\p = (R^s)^* R^s \p \quad \textrm{in } \Omega_s,
\end{equation*}
one can observe that
\begin{equation*}
\supp \left( \div (R^s)^* R^s \p \right) \subset \tOmega_s
\end{equation*}
and
\begin{equation*}
\div (R^s)^* R^s \p = \div \p \quad \textrm{in } \Omega_s^{\circ}.
\end{equation*}
Thus, we obtain that
\begin{equation} \begin{split}
\label{edges}
&\quad\left\| \div (R^s)^* R^s \p \right\|_2^2 - \| (\div \p ) |_{\Omega_s} \|_2^2 \\
&\leq \sum_{(i,j) \in \tOmega_s \setminus \Omega_s} \left[ \left( \div (R^s)^* R^s \p \right)_{ij} \right]^2
+ \sum_{(i,j) \in \Omega_s \setminus \Omega_s^{\circ}} \left[ \left( \div (R^s)^* R^s \p \right)_{ij} \right]^2 \\
&= \sum_{(i,j) \in E_1} ( p_{ij}^1 + p_{ij}^2 - p_{i,j-1}^2 )^2 + \sum_{(i,j) \in E_2} ( - p_{i-1,j}^1 )^2 \\
&\quad+ \sum_{(i,j) \in E_3} ( p_{ij}^1 - p_{i-1,j}^1 + p_{ij}^2 )^2 + \sum_{(i,j) \in E_4} ( - p_{i,j-1}^2 )^2
+ \sum_{(i,j) \in E_5} ( p_{ij}^1 + p_{ij}^2 )^2,
\end{split} \end{equation}
where $E_1$, $E_2$, $E_3$, $E_4$, and $E_5$ are regions indicated in \cref{Fig:subdomain}(c).
Note that the cardinalities of these regions are $\frac{N}{N_s} - 1$, $\frac{N}{N_s}$, $\frac{M}{M_s} - 1$, $\frac{M}{M_s}$, and $1$, respectively.
By the Cauchy--Schwarz inequality, we have
\begin{align*}
\sum_{(i,j) \in E_1} ( p_{ij}^1 + p_{ij}^2 - p_{i,j-1}^2 )^2 &\leq \sum_{(i,j) \in E_1} 3\left[ (p_{ij}^1)^2 + (p_{ij}^2)^2 + (p_{i, j-1}^2)^2 \right] \\
&\leq \sum_{(i,j) \in E_1} 6 \| \p \|_{\infty}^2 = 6 \left( \frac{N}{N_s} - 1 \right) \| \p \|_{\infty}^2.
\end{align*}
Similarly, one obtains the followings:
\begin{eqnarray*}
\sum_{(i,j) \in E_2} ( - p_{i-1,j}^1 )^2 &\leq& \frac{N}{N_s} \| \p \|_{\infty}^2, \\
 \sum_{(i,j) \in E_3} ( p_{ij}^1 - p_{i-1,j}^1 + p_{ij}^2  )^2 &\leq& 6 \left( \frac{M}{M_s} - 1 \right) \| \p \|_{\infty}^2, \\
 \sum_{(i,j) \in E_4} ( - p_{i,j-1}^2 )^2 &\leq& \frac{M}{M_s} \| \p \|_{\infty}^2, \\
\sum_{(i,j) \in E_5} ( p_{ij}^1 + p_{ij}^2 )^2 &\leq& \| \p \|_{\infty}^2.
\end{eqnarray*}
Combining all inequalities stated above, we conclude that
\begin{align*}
\left\| \div (R^s)^* R^s \p \right\|_2^2 - \| (\div \p ) |_{\Omega_s} \|_2^2 &\leq \left[ 6\left( \frac{N}{N_s} - 1 \right) + \frac{N}{N_s} + 6\left( \frac{M}{M_s} - 1 \right) + \frac{M}{M_s} + 1 \right] \| \p \|_{\infty}^2\\
&= 7 \left( \frac{M}{M_s} + \frac{N}{N_s} - \frac{11}{7} \right) \| \p \|_{\infty}^2.
\end{align*}
Summing the last inequality for $s=1, \dots, \N$ yields
\begin{align*}
 \sum_{s=1}^{\N} \| \div (R^s)^* R^s \p \|_2^2 - \| \div \p \|_2^2 &= \sum_{s=1}^{\N} \left( \| \div (R^s)^* R^s \p \|_2^2 - \| (\div \p)|_{\Omega_s} \|_2^2\right) \\
 &\leq 7M_s N_s \left( \frac{M}{M_s} + \frac{N}{N_s} - \frac{11}{7}\right) \| \p \|_{\infty}^2 \\
 &= 7\left(MN_s + M_s N - \frac{11}{7}M_s N_s \right) \| \p \|_{\infty}^2 .
\end{align*}
This completes the proof.
\end{proof}

% Figure: Subdomain
\begin{figure}[]
\centering
\subfloat[][$\tOmega_s$]{ \includegraphics[height=4.6cm]{./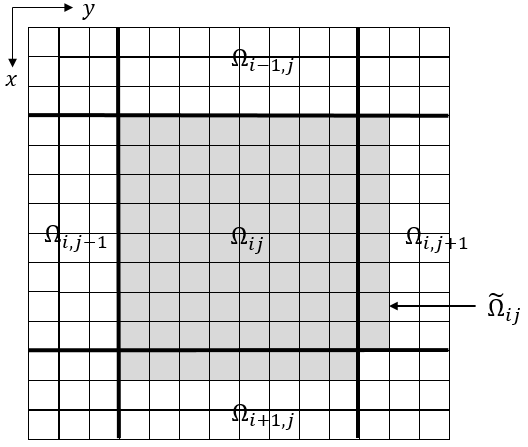} }
\quad
\subfloat[][$\Omega_s^{\circ}$] { \includegraphics[height=4.6cm]{./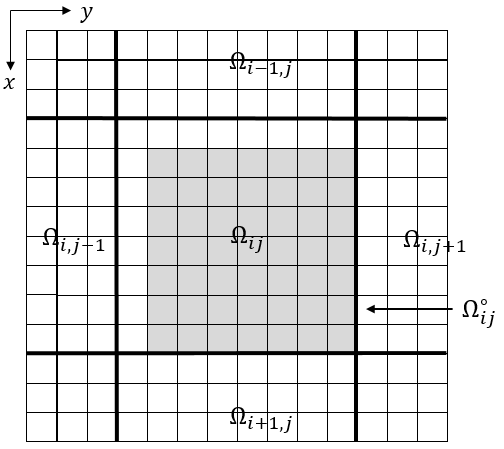} } \\
\subfloat[][$E_1 , E_2, E_3, E_4$, and $E_5$]{ \includegraphics[height=4.6cm]{./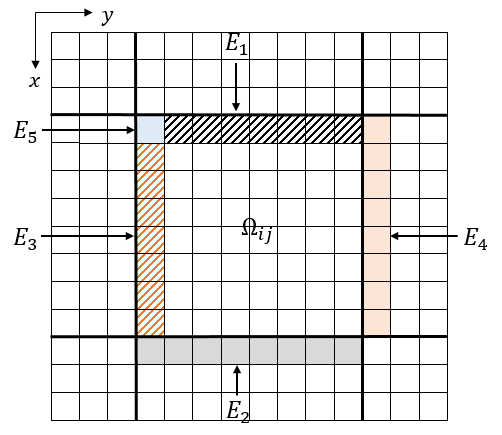} }
\quad\quad
\caption{Subsets of $\Omega$ introduced in the proof of \cref{Lem:D}}
\label{Fig:subdomain}
\end{figure}

% Remark: Case M_s = 1, N_s = 1
\begin{remark}
\label{Rem:stripe}
In the case of the stripe-shaped domain decomposition, say $M_s = 1$ and $N_s = \N$, one may obtain a sharper estimate than \cref{Lem:D} since we do not need to deal with the terms related to~$E_1$ and~$E_2$ in~\cref{edges}.
As a result, we obtain
\begin{equation*}
\sum_{k=1}^{N_c} \| \div R_k^* R_k \p \|_2^2 \leq \| \div \p \|_2^2 + \N (7M - 5) \| \p \|_{\infty}^2 .
\end{equation*}
\end{remark}

As a direct consequence of \cref{Lem:D}, we get the following upper bound for $D(\p^*, \p)$ for any $\p \in C$.

% Corollary: Estimate of D
\begin{corollary}
\label{Cor:D}
Let $\p^*$ be a solution of~\cref{d_dual_ROF}.
Then, for any $\p \in C$, we have
\begin{equation*}
D(\p^*, \p) \leq F(\p) - F(\p^*) + 2c_1,
\end{equation*}
where $c_1$ is given in~\cref{c1}.
\end{corollary}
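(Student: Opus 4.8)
The plan is to reduce the claim to \cref{Lem:D} by rewriting $D(\p^*, \p)$ in terms of a single difference field and then controlling the two resulting terms separately. By \cref{Bregman2,D_dist} we have
\begin{equation*}
D(\p^*, \p) = \sum_{k=1}^{N_c} \frac{1}{2}\|\div R_k^* R_k(\p^* - \p)\|_2^2 ,
\end{equation*}
so applying \cref{Lem:D} to the field $\p^* - \p \in W$ immediately gives
\begin{equation*}
D(\p^*, \p) \leq \frac{1}{2}\|\div(\p^* - \p)\|_2^2 + \frac{c_1}{2}\|\p^* - \p\|_{\infty}^2 .
\end{equation*}
It then remains to bound the two terms on the right-hand side by $F(\p) - F(\p^*)$ and $2c_1$, respectively.

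For the first term I would use the quadratic structure of $F$ together with the optimality of $\p^*$. The identity already recorded in the proof of \cref{Lem:Lip} yields
\begin{equation*}
\frac{1}{2}\|\div(\p - \p^*)\|_2^2 = F(\p) - F(\p^*) - \langle \nabla F(\p^*), \p - \p^* \rangle .
\end{equation*}
Since $\p^*$ minimizes $F + \chi_C$ over $W$ and $\p \in C$, the first-order optimality condition gives the variational inequality $\langle \nabla F(\p^*), \p - \p^* \rangle \geq 0$, so the first term is bounded above by $F(\p) - F(\p^*)$.

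For the second term I would exploit the constraint set $C$ directly. Because both $\p$ and $\p^*$ lie in $C$, every entry satisfies $|\p_{ij}| \leq 1$ and $|\p^*_{ij}| \leq 1$, whence $|\p^*_{ij} - \p_{ij}| \leq 2$ for all $(i,j)$; thus $\|\p^* - \p\|_{\infty} \leq 2$ and the second term is at most $\frac{c_1}{2}\cdot 4 = 2c_1$. Adding the two bounds completes the argument.

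The computation is short, and the only step requiring genuine care is the first one: one must invoke the optimality of the dual minimizer $\p^*$ as a variational inequality rather than as a gradient equation, since the constraint $\chi_C$ is active at the optimum and $F$ is merely convex (not strictly convex). Everything else is a direct substitution into \cref{Lem:D} followed by an elementary diameter estimate for $C$.
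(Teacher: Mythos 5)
Your proposal is correct and follows essentially the same route as the paper: apply \cref{Lem:D} to the field $\p - \p^*$, bound $\tfrac{1}{2}\|\div(\p-\p^*)\|_2^2$ by $F(\p)-F(\p^*)$ via the optimality (variational inequality) of $\p^*$, and bound $\|\p-\p^*\|_\infty$ by $2$ since both points lie in $C$. The only difference is that you spell out the first-order optimality step, which the paper leaves implicit.
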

\begin{proof}
From the optimality condition of~$\p^*$, we get
\begin{equation}
\label{CorD1}
\frac{1}{2} \| \div (\p - \p^*) \|_2^2 \leq F(\p) - F(\p^*).
\end{equation}
Also, since both $\p$ and $\p^*$ are in~$C$, we have
\begin{equation}
\label{CorD2}
\| \p - \p^* \|_{\infty} \leq 2.
\end{equation}
With \cref{Lem:D,CorD1,CorD2}, we readily obtain
\begin{align*}
D(\p^* , \p) &= \frac{1}{2} \sum_{k=1}^{N_c} \| \div R_k^* R_k (\p - \p^* ) \|_2^2 \\
&\leq \frac{1}{2} \| \div (\p - \p^*) \|_2^2 + \frac{c_1}{2} \| \p - \p^* \|_{\infty}^2 \\
&\leq F(\p) - F(\p^*) + 2c_1.
\end{align*}
\end{proof}

Combining \cref{Thm:RJ,Cor:D}, we obtain the following estimate of convergence rate, in which dependence on the size of the image and the number of subdomains is revealed.

% Corollary: Relaxed Jacobi method
\begin{corollary}
\label{Cor:RJ}
Let $\left\{ \p^{(n)} \right\}$ be the sequence generated by \cref{Alg:RJ} and $\p^*$ be a solution of~\cref{d_dual_ROF}.
Then, for any $n \geq 1$, we have
\begin{equation*}
F(\p^{(n)}) - F(\p^*) \leq \frac{N_c}{n} \left[ \left(2 - \frac{1}{N_c} \right) (F(\p^{(0)}) - F(\p^*)) + 2 c_1 \right],
\end{equation*}
where $c_1$ is given in~\cref{c1}.
\end{corollary}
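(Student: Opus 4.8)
The plan is to obtain \cref{Cor:RJ} as a direct consequence of the two results that precede it, namely \cref{Thm:RJ,Cor:D}. \Cref{Thm:RJ} already supplies the bound
\begin{equation*}
F(\p^{(n)}) - F(\p^*) \leq \frac{N_c D(\p^* , \p^{(0)}) + (N_c - 1)\left( F(\p^{(0)}) - F(\p^*) \right)}{n},
\end{equation*}
so the only work remaining is to replace the abstract quantity $D(\p^*, \p^{(0)})$ by something that makes the dependence on the image size and the number of subdomains explicit. That replacement is exactly what \cref{Cor:D} provides.

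First I would invoke \cref{Cor:D} with the choice $\p = \p^{(0)}$. This is legitimate because \cref{Alg:RJ} is initialized with $\p^{(0)} \in C$, which is precisely the hypothesis required by \cref{Cor:D}, yielding
\begin{equation*}
D(\p^*, \p^{(0)}) \leq F(\p^{(0)}) - F(\p^*) + 2c_1 .
\end{equation*}
Substituting this into the numerator from \cref{Thm:RJ} and collecting the two terms proportional to $F(\p^{(0)}) - F(\p^*)$ gives
\begin{equation*}
N_c D(\p^*, \p^{(0)}) + (N_c - 1)\left( F(\p^{(0)}) - F(\p^*) \right) \leq (2N_c - 1)\left( F(\p^{(0)}) - F(\p^*) \right) + 2 N_c c_1 .
\end{equation*}
Factoring $N_c$ out of the right-hand side rewrites the coefficient $2N_c - 1$ as $N_c\left( 2 - \tfrac{1}{N_c} \right)$, and dividing through by $n$ produces precisely the asserted estimate.

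Since every step is merely a substitution into inequalities already established in the excerpt, there is no genuine analytic obstacle here; the work is entirely bookkeeping. The two points that warrant attention are confirming that the initialization $\p^{(0)} \in C$ indeed licenses the application of \cref{Cor:D}, and checking the algebraic rearrangement of the coefficients, in particular that $N_c + (N_c - 1) = 2N_c - 1$ factors cleanly as $N_c\left( 2 - \tfrac{1}{N_c} \right)$ so that the constant $N_c$ can be pulled out front to match the stated form.
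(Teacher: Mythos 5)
Your proof is correct and is exactly the argument the paper intends: the paper states \cref{Cor:RJ} as an immediate combination of \cref{Thm:RJ,Cor:D} without writing out the substitution, and your bookkeeping (applying \cref{Cor:D} at $\p = \p^{(0)} \in C$ and factoring $2N_c - 1 = N_c(2 - 1/N_c)$) fills in precisely those details.
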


% Remark: Convergence rate and the length of the subdomain interface
\begin{remark}
\label{Rem:interface}
One can obtain a relation between the convergence rate and the total length of the subdomain interfaces from \cref{Cor:RJ}.
Let $\Gamma_{st} = \partial \Omega_s \cap \partial \Omega_t$ for $s<t$ and $\Gamma = \bigcup_{s<t} \Gamma_{st}$.
If the decomposition $\{ \Omega_s \}_{s=1}^{\N}$ is of $M_s \times N_s$, it is clear that the length of $\Gamma$ is given by
\begin{equation*}
|\Gamma| = M(N_s - 1) + N(M_s - 1).
\end{equation*}
For fixed $M$ and $N$, $c_1$ satisfies
\begin{equation}
\label{interface}
c_1 \leq 7|\Gamma| + c_2,
\end{equation}
where $c_2$ is a constant independent of $M_s$ and $N_s$.
Therefore, it is expected that the convergence rate of \cref{Alg:RJ} may deteriorate as $|\Gamma|$ increases.
Using \cref{Rem:stripe}, the same bound as~\cref{interface} can be yielded for the stripe-shaped domain decomposition.
\end{remark}

% Subsection: Pre-relaxed Block Jacobi method
\subsection{Pre-relaxed Block Jacobi method}
First, we observe that \cref{Lem:RJ_FB} has the similar form  to one in the fundamental lemma of the forward-backward splitting~(see Lemma~2.3 of~\cite{BT:2009}):
\begin{equation}
\label{FB_fund}
F(\p) - F(\T (\q)) \geq \frac{L}{2} \left( \| \p - \T (\q) \|^2 - \| \p - \q \|^2 \right), \hspace{0.5cm} \p, \q \in C,
\end{equation}
where $\T$:~$W \rightarrow W$ is a certain operator which computes the next iterate of the algorithm, and~$L$ is a constant.
If we replace $F(\p)$ in~\cref{FB_fund} by $\frac{1}{N_c} F(\p) + \frac{N_c - 1}{N_c} F(\q)$, then we obtain the formula in \cref{Lem:RJ_FB}.
Note that an~$O(1/n)$ convergent algorithm with the forward-backward splitting~\cref{FB_fund} has the FISTA acceleration~\cite{BT:2009}.
However, even though Algorithm~1 has the~$O(1/n)$ convergence rate, it is difficult to apply the FISTA acceleration because of such differences.
Since the relaxation step is crucial to guarantee the convergence of the algorithm~\cite{CTWY:2015}, 
to apply the FISTA acceleration, we modify the algorithm so that a formula of the form~\cref{FB_fund} can be yielded while it still has the relaxation step.

We define an alternative local solution operator $\P_k$:~$W \rightarrow W_k$ by
\begin{equation}
\label{P_k}
\P_k (\p) = \argmin_{\p_k \in W_k} \left\{ F_k \left( N_c \p_k - (N_c - 1) R_k \p ; \p \right) + \chi_{C_k} (\p_k) \right\}, \hspace{0.5cm} \p \in W.
\end{equation}
The difference between two operators $\S_k$ and $\P_k$ is that, $\p_k$ in $\S_k$ is replaced by the relaxed term $N_c \p_k - (N_c - 1) R_k \p$ in $\P_k$.
Now, we propose a variant of \cref{Alg:RJ} which uses~$\P_k$ as the local solution operator; see \cref{Alg:PJ}.

% Algorithm (Pre-relaxed Block Jacobi Method)
\begin{algorithm}[]
\caption{Pre-relaxed Block Jacobi Method}
\begin{algorithmic}[]
\label{Alg:PJ}
\STATE Let $\p^{(0)} \in C$.
\FOR {$n=0,1,2,\dots$}
\STATE $\displaystyle \p^{(n+1)} = \sum_{k=1}^{N_c} R_k^* \P_k (\p^{(n)})$
\ENDFOR
\end{algorithmic}
\end{algorithm}

Compared to \cref{Alg:RJ}, the relaxation step is inside the local solution operator in \cref{Alg:PJ}.
That is, the relaxation step is done \textit{before} computing the local solution operator.
This is why we call \cref{Alg:PJ} the \textit{pre-relaxed} block Jacobi method.
The following lemma says that an application of the operator $\sum_{k=1}^{N_c} R_k^* \P_k$ is in fact a proximal descent step with respect to a pseudometric
\begin{equation*}
d(\p, \q) = \left( \sum_{k=1}^{N_c} \| \div R_k^* R_k (\p - \q) \|_2^2 \right)^{\frac{1}{2}} = \sqrt{2D(\p, \q)}, \quad \p, \q \in W.
\end{equation*}

% Lemma: Pre-relaxed block Jacobi method is ISTA.
\begin{lemma}
\label{Lem:PJ_ISTA}
For $\q \in C$, $\sum_{k=1}^{N_c} R_k^* \P_k (\q)$ is a solution of the minimization problem
\begin{equation}
\label{PJ_ISTA}
\min_{\p \in W} \left\{ F(\q) + \left< \nabla F(\q) , \p - \q \right> + N_c D(\p, \q) + \chi_C (\p) \right\} .
\end{equation}
\end{lemma}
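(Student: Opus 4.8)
The plan is to exploit the fact that the objective in~\cref{PJ_ISTA} decouples completely across the colors $k=1,\dots,N_c$, so that the single global minimization splits into $N_c$ independent block minimizations, each of which is solved exactly by $\P_k(\q)$. Writing $\p_k = R_k \p \in W_k$, I would first use $\sum_{k=1}^{N_c} R_k^* R_k = I$ together with the definitions $D(\p,\q) = \sum_k D_k(R_k\p, R_k\q)$, the identity $D_k(R_k\p,R_k\q) = \frac12 \|\div R_k^*(\p_k - R_k\q)\|_2^2$ from~\cref{Bregman2}, and $\chi_C(\p) = \sum_k \chi_{C_k}(R_k\p)$ to rewrite the objective as
\[
F(\q) + \langle \nabla F(\q), \p - \q \rangle + N_c D(\p,\q) + \chi_C(\p) = F(\q) + \sum_{k=1}^{N_c} g_k(\p_k),
\]
where
\[
g_k(\p_k) = \langle R_k \nabla F(\q), \p_k - R_k\q \rangle + \frac{N_c}{2}\|\div R_k^*(\p_k - R_k\q)\|_2^2 + \chi_{C_k}(\p_k).
\]
Since $W = \bigoplus_k R_k^* W_k$, the blocks $\p_k$ are independent, so minimizing the left-hand side over $\p \in W$ amounts to minimizing each $g_k$ over $\p_k \in W_k$ separately.

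The heart of the argument is then to identify the minimizer of $g_k$ with $\P_k(\q)$ from~\cref{P_k}. The key step is the algebraic simplification
\[
R_k^*\big(N_c\p_k - (N_c-1)R_k\q\big) + (I - R_k^* R_k)\q = \q + N_c R_k^*(\p_k - R_k\q),
\]
which follows from $R_k^* R_k \q = R_k^*(R_k\q)$ and collecting terms. Substituting this into $F_k(N_c\p_k - (N_c-1)R_k\q ; \q) = F\big(\q + N_c R_k^*(\p_k - R_k\q)\big)$ and expanding the quadratic functional $F$ about $\q$ produces exactly a constant $F(\q)$, a linear term $N_c \langle R_k \nabla F(\q), \p_k - R_k\q\rangle$, and a quadratic term $\frac{N_c^2}{2}\|\div R_k^*(\p_k - R_k\q)\|_2^2$. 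Using that $N_c \chi_{C_k} = \chi_{C_k}$ as an extended-real-valued function, the objective defining $\P_k(\q)$ is therefore precisely $F(\q) + N_c\, g_k(\p_k)$.

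Finally, since $F(\q)$ is a constant in $\p_k$ and $N_c > 0$, the minimizers of $F(\q) + N_c\, g_k$ coincide with those of $g_k$; hence $\P_k(\q)$ minimizes $g_k$, and assembling $\sum_{k=1}^{N_c} R_k^* \P_k(\q)$ yields a minimizer of the decoupled objective, which is the claim. I expect the main obstacle to be the careful bookkeeping of the factor $N_c$ in the quadratic term: the expansion of $F$ at the shifted point $\q + N_c R_k^*(\cdot)$ naturally carries an $N_c^2$ coefficient, and one must verify that overall rescaling by $N_c$ (which leaves the minimizer unchanged) reconciles this with the $\frac{N_c}{2}$ weight coming from $N_c D(\p,\q)$, and that the relaxed shift $N_c\p_k - (N_c-1)R_k\q$ is exactly what is needed to generate the correct linear term $\langle \nabla F(\q), \cdot\rangle$ rather than a gradient evaluated elsewhere.
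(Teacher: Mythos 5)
Your proof is correct, and it takes a genuinely different (though closely related) route from the paper's. The paper computes the gradient of the local objective $G_k(\p_k) = F_k(N_c\p_k-(N_c-1)R_k\q;\q)$, writes the resulting variational inequality characterizing $\P_k(\q)$ over $C_k$, and then assembles these $N_c$ inequalities into the single variational inequality that is the optimality condition of~\cref{PJ_ISTA}; the identification happens entirely at the level of first-order conditions. You instead work at the level of the objectives themselves: you use block-separability of~\cref{PJ_ISTA} over $W=\bigoplus_k R_k^* W_k$ and the exact second-order expansion of the quadratic $F$ at the shifted point $\q+N_c R_k^*(\p_k-R_k\q)$ to show that the functional defining $\P_k(\q)$ equals $F(\q)+N_c\,g_k(\p_k)$, so the local solution sets coincide block by block. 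Both arguments rest on the same algebra --- the relaxed argument $N_c\p_k-(N_c-1)R_k\q$ is precisely what turns the local energy into the linearized-plus-proximal form, with the $N_c^2$ coefficient from the quadratic expansion rescaling down to the $N_c$ weight in $N_c D(\cdot,\q)$ --- but yours is somewhat more elementary (no gradient computation or variational inequality is needed, only that $N_c\chi_{C_k}=\chi_{C_k}$ and that an additive constant and positive scaling preserve minimizers), and it makes explicit that the solution sets of the two problems agree exactly, not merely that $\sum_k R_k^*\P_k(\q)$ is one particular minimizer of~\cref{PJ_ISTA}.
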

\begin{proof}
For $\p_k \in W_k$, let
\begin{equation*} \begin{split}
G_k (\p_k) &= F_k \left( N_c \p_k - (N_c - 1) R_k \q ; \q \right) \\
&= \frac{1}{2} \left\| \div \left( R_k^* (N_c\p_k - (N_c - 1) R_k \q) + (I-R_k^* R_k )\q \right) + \alpha f \right\|_2^2.
\end{split} \end{equation*}
Let us define
\begin{equation}
\label{A_k}
A_k = R_k \div^* \div R_k^*, \hspace{0.5cm} A = \sum_{k=1}^{N_c} R_k^* A_k R_k.
\end{equation}
Then, we have
\begin{align*}
\nabla G_k(\p_k) &= N_c R_k \div^* \left[ \div \left( R_k^* (N_c \p_k - (N_c - 1) R_k \q ) + (I-R_k^* R_k )\q \right) + \alpha f \right] \\
&=  N_c [ R_k \nabla F(\q) + N_c A_k (\p_k - R_k \q)].
\end{align*}
Thus, the optimality condition of~\cref{P_k} reads as
\begin{equation}
\label{PJ_ISTA1}
\left< R_k \nabla F(\q) + N_c A_k (\P_k (\q) - R_k \q) , \p_k - \P_k (\q) \right> \geq 0, \hspace{0.5cm} \p_k \in C_k.
\end{equation}
Assembly of~\cref{PJ_ISTA1} for~$k=1,\dots, N_c$ gives
\begin{equation}
\label{P_opt}
\left< \nabla F (\q) + N_c A \left( \sum_{k=1}^{N_c} R_k^* \P_k (\q) - \q \right) , \p - \sum_{k=1}^{N_c} R_k^* \P_k (\q) \right> \geq 0, \hspace{0.5cm} \p \in C, 
\end{equation}
which is the optimality condition of~\cref{PJ_ISTA}.
Therefore,  $\sum_{k=1}^{N_c} R_k^* \P_k (\q)$ is a solution of~\cref{PJ_ISTA}.
\end{proof}

Together with \cref{Lem:Lip,Lem:PJ_ISTA}, we obtain a formula for \cref{Alg:PJ} which has the exactly same form as~\cref{FB_fund}.
The fundamental lemma for \cref{Alg:PJ} is presented in the following.

% Lemma: Forward-Backward structure of the pre-relaxed Jacobi method
\begin{lemma}
\label{Lem:PJ_FB}
For any $\p, \q \in C$, we have
\begin{equation*}
\frac{1}{N_c}F(\p) - \frac{1}{N_c} F\left(\sum_{k=1}^{N_c} R_k^* \P_k (\q) \right) \geq D\left( \p , \sum_{k=1}^{N_c} R_k^* \P_k (\q) \right) - D(\p , \q).
\end{equation*}
\end{lemma}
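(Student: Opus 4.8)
The plan is to derive this as a direct consequence of the two lemmas just established, \cref{Lem:Lip} and \cref{Lem:PJ_ISTA}, mimicking exactly the proof of the fundamental descent lemma for the forward-backward splitting (Lemma~2.3 of~\cite{BT:2009}). Write $\bP (\q) = \sum_{k=1}^{N_c} R_k^* \P_k (\q)$ for brevity. The key structural observation is that, by \cref{Lem:PJ_ISTA}, the point $\bP(\q)$ minimizes the convex surrogate
\begin{equation*}
\Phi (\p) = F(\q) + \left< \nabla F(\q) , \p - \q \right> + N_c D(\p, \q) + \chi_C (\p)
\end{equation*}
over $\p \in W$, and by \cref{Lem:Lip} this surrogate dominates $F$ on $C$ in the sense that $F(\bP(\q)) \leq \Phi(\bP(\q))$. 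These two facts are precisely the ingredients that make the FISTA-type estimate go through.

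The key steps I would carry out, in order, are as follows. First I would record the majorization: applying \cref{Lem:Lip} with $\p = \bP(\q)$ gives
\begin{equation*}
F(\bP(\q)) \leq F(\q) + \left< \nabla F(\q), \bP(\q) - \q \right> + N_c D(\bP(\q), \q) = \Phi(\bP(\q)).
\end{equation*}
Second, I would exploit optimality. Since $D(\cdot, \q)$ is a quadratic form in $\div R_k^* R_k(\cdot)$, the optimality condition~\cref{P_opt} can be upgraded to a full ``three-point'' inequality for the strongly convex (in the pseudometric $d$) objective: for any $\p \in C$,
\begin{equation*}
\Phi(\p) \geq \Phi(\bP(\q)) + N_c \, D(\p, \bP(\q)).
\end{equation*}
This is the standard consequence of the first-order optimality of a minimizer of a sum of a linear term, the quadratic form $N_c D(\cdot, \q)$, and the indicator $\chi_C$, using the identity $D(\p,\q) = D(\p, \bP(\q)) + D(\bP(\q), \q) + \langle \div R_k^* R_k(\p - \bP(\q)), \div R_k^* R_k(\bP(\q) - \q)\rangle$ summed over $k$ (a polarization identity for $D$). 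Third, I would chain the two inequalities: combining the majorization with the three-point inequality and using the definition $\Phi(\p) = F(\q) + \langle \nabla F(\q), \p - \q\rangle + N_c D(\p,\q)$ for a general $\p \in C$ gives
\begin{equation*}
F(\p) + N_c D(\p, \q) \geq F(\bP(\q)) + N_c D(\p, \bP(\q)),
\end{equation*}
where I have also used convexity of $F$ in the form $F(\p) \geq F(\q) + \langle \nabla F(\q), \p - \q\rangle$ to replace the linear surrogate by $F(\p)$ itself. Dividing by $N_c$ and rearranging yields exactly the claimed inequality.

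The main obstacle I anticipate is the second step: turning the variational inequality~\cref{P_opt} into the sharp three-point bound with the correct constant $N_c$ in front of $D(\p, \bP(\q))$. This requires carefully exploiting the quadratic structure of $D$ through the operator $A$ defined in~\cref{A_k}, noting that $N_c D(\p, \q) = \tfrac{N_c}{2} \langle A(\p - \q), \p - \q\rangle$ and that the polarization identity splits this cleanly. Concretely I would substitute $\p - \q = (\p - \bP(\q)) + (\bP(\q) - \q)$ into the quadratic, expand, and absorb the cross term using~\cref{P_opt}; the indicator term $\chi_C$ causes no difficulty since both $\p$ and $\bP(\q)$ lie in $C$. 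Everything else is routine bookkeeping with the convexity of $F$ and the descent lemma, so once the three-point inequality is in hand the conclusion follows immediately.
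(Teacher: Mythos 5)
Your proposal is correct and follows essentially the same route as the paper: the paper likewise combines the descent inequality from \cref{Lem:Lip}, the variational inequality~\cref{P_opt} from \cref{Lem:PJ_ISTA}, and the convexity of $F$, with the polarization identity for $D$ entering implicitly when the three inequalities are summed. Your repackaging of the optimality step as a three-point inequality for the surrogate $\Phi$ is just a cleaner organization of that same summation.
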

\begin{proof}
For the sake of convenience, we define $A$ as in~\cref{A_k} and
\begin{equation*}
\P (\q) = \sum_{k=1}^{N_c} R_k^* \P_k (\q).
\end{equation*}
Invoking \cref{Lem:Lip} yields
\begin{equation}
\label{PJ_FB1}
F (\p) - F(\P (\q)) \geq F(\p) - F(\q) - \left< \nabla F(\q), \P (\q)  - \q \right> - N_c D(\P (\q) , \q). 
\end{equation}
By \cref{Lem:PJ_ISTA}, $\P (\q)$ satisfies~\cref{P_opt}, that is,
\begin{equation}
\label{PJ_FB2}
\left< \nabla F(\q) + N_c A (\P (\q) - \q) , \p - \P (\q) \right> \geq 0. 
\end{equation}
Also, by the convexity of $F$, we have
\begin{equation}
\label{PJ_FB3}
F(\p) \geq F(\q) + \left< \nabla F(\q) , \p - \q \right>.
\end{equation}
Then, we get the desired result by summation of~\cref{PJ_FB1,PJ_FB2,PJ_FB3}.
\end{proof}

Since \cref{Lem:PJ_FB} has exactly the same form as Lemma~2.3 of~\cite{BT:2009}, analysis of the~$O(1/n)$ convergence rate is straightforward.

% Theorem: Convergence rate of Pre-relaxed Block Jacobi Method
\begin{theorem}
\label{Thm:PJ}
Let $\left\{ \p^{(n)} \right\}$ be the sequence generated by \cref{Alg:PJ} and $\p^*$ be a solution of~\cref{d_dual_ROF}.
Then, for any $n \geq 1$, we have
\begin{equation*}
F(\p^{(n)}) - F(\p^*) \leq \frac{N_c }{n} \left[ F(\p^{(0)}) - F(\p^*) + 2c_1 \right],
\end{equation*}
where $c_1$ is given in~\cref{c1}.
\end{theorem}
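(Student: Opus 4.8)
The plan is to port verbatim the $O(1/n)$ analysis of ISTA (Theorem~3.1 of~\cite{BT:2009}), which is legitimate because \cref{Lem:PJ_FB} is exactly the forward-backward fundamental inequality~\cref{FB_fund} with $L = N_c$ and the squared Euclidean distance replaced by the Bregman-type quantity $2D$. Since \cref{Alg:PJ} sets $\p^{(n+1)} = \sum_{k=1}^{N_c} R_k^* \P_k(\p^{(n)})$, taking $\q = \p^{(n)}$ in \cref{Lem:PJ_FB} specializes the lemma to
\[
\frac{1}{N_c} F(\p) - \frac{1}{N_c} F(\p^{(n+1)}) \geq D(\p, \p^{(n+1)}) - D(\p, \p^{(n)}), \qquad \p \in C,
\]
and all the work is in extracting the rate from this single recurrence.

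First I would establish monotonicity of the energy: putting $\p = \q = \p^{(j)}$ and using that the Bregman distance vanishes on the diagonal, $D(\p^{(j)}, \p^{(j)}) = 0$, gives $F(\p^{(j)}) - F(\p^{(j+1)}) \geq N_c D(\p^{(j)}, \p^{(j+1)}) \geq 0$, so $\{F(\p^{(n)})\}$ is nonincreasing. Next, choosing $\p = \p^*$ and $\q = \p^{(j)}$ and rearranging yields the telescoping estimate
\[
\frac{1}{N_c}\bigl(F(\p^{(j+1)}) - F(\p^*)\bigr) \leq D(\p^*, \p^{(j)}) - D(\p^*, \p^{(j+1)}).
\]
Summing over $j = 0, 1, \dots, n-1$ and discarding the nonnegative tail $D(\p^*, \p^{(n)})$ gives $\frac{1}{N_c}\sum_{j=1}^{n}(F(\p^{(j)}) - F(\p^*)) \leq D(\p^*, \p^{(0)})$. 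I would then invoke monotonicity to bound each summand below by $F(\p^{(n)}) - F(\p^*)$, producing $\frac{n}{N_c}(F(\p^{(n)}) - F(\p^*)) \leq D(\p^*, \p^{(0)})$. The argument closes by applying \cref{Cor:D} at $\p = \p^{(0)}$, which gives $D(\p^*, \p^{(0)}) \leq F(\p^{(0)}) - F(\p^*) + 2c_1$; substituting this bound produces exactly the claimed estimate.

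I expect no genuine obstacle here, since the essential difficulty was front-loaded into proving \cref{Lem:PJ_FB} (through \cref{Lem:Lip,Lem:PJ_ISTA}) and \cref{Cor:D}. The only points requiring care are conceptual rather than computational: one must recognize that the role of the squared distance in the classical ISTA proof is played throughout by $D$, and, relatedly, that the monotonicity step here rests on $D$ being zero on the diagonal rather than on any strong convexity of $F$ (indeed $F$ is not strongly convex). Everything else is routine telescoping and the use of the already-proven bound on $D(\p^*, \p^{(0)})$.
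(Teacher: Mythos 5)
Your proposal is correct and follows essentially the same route as the paper: the paper simply cites Theorem~3.1 of Beck--Teboulle with $L=N_c$ and the seminorm induced by $D$ to get $F(\p^{(n)})-F(\p^*)\leq N_c D(\p^*,\p^{(0)})/n$, and then applies \cref{Cor:D}, exactly as you do. Your telescoping-plus-monotonicity argument is just the standard proof of that cited theorem written out explicitly, and all the individual steps check out.
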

\begin{proof}
In Theorem~3.1 of~\cite{BT:2009}, we replace $L(f)$ and $\| \cdot \|$ by $N_c$ and
\begin{equation*}
\| \cdot \| = \left( \sum_{k=1}^{N_c}\| \div R_k^* R_k (\cdot) \|_2^2 \right)^{\frac{1}{2}},
\end{equation*}
respectively, the notations we use.
Consequently, we have the following estimate:
\begin{equation}
\label{PJ_rate}
F(\p^{(n)}) - F(\p^*) \leq \frac{N_c D(\p^* , \p^{(0)})}{n}.
\end{equation}
Application of \cref{Cor:D} to~\cref{PJ_rate} yields the desired result.
\end{proof}

% Remark: Constant on the estimate
\begin{remark}
\label{Rem:const}
In \cref{Thm:PJ}, we gave a little sharper bound than \cref{Thm:RJ}.
Indeed, our numerical experiments presented in \cref{Sec:Numerical} will show that \cref{Thm:PJ} has a faster energy decay than \cref{Thm:RJ} at initial dozens of iterations.
\end{remark}

In addition, an accelerated version of \cref{Alg:PJ} with the $O(1/n^2)$ convergence rate can be designed by the same way as FISTA~\cite{BT:2009}.
It is summarized in \cref{Alg:FPJ}.

% Algorithm (Fast Pre-relaxed Block Jacobi Method)
\begin{algorithm}[]
\caption{Fast Pre-relaxed Block Jacobi Method}
\begin{algorithmic}[]
\label{Alg:FPJ}
\STATE Let $\p^{(0)} = \q^{(0)} \in C$ and $t_0 = 1$.
\FOR {$n=0,1,2,\dots$}
\STATE $\displaystyle \p^{(n+1)} = \sum_{k=1}^{N_c} R_k^* \P_k \q^{(n)}$
\STATE $\displaystyle t_{n+1} = \frac{1 + \sqrt{1 + 4t_n^2}}{2}$
\STATE $\displaystyle \q^{(n+1)} = \p^{(n+1)} + \frac{t_n - 1}{t_{n+1}} (\p^{(n+1)} - \p^{(n)})$
\ENDFOR
\end{algorithmic}
\end{algorithm}

Finally, we state the convergence theorem for \cref{Alg:FPJ}.
\Cref{Thm:FPJ} says that \cref{Alg:FPJ} is superior to \cref{Alg:RJ,Alg:PJ}.
In \cref{Sec:Numerical}, we will present some numerical examples where \cref{Alg:FPJ} shows better performance than others.

% Theorem: Convergence rate of Fast Pre-relaxed Block Jacobi Method
\begin{theorem}
\label{Thm:FPJ}
Let $\left\{ \p^{(n)} \right\}$ be the sequence generated by \cref{Alg:FPJ} and $\p^*$ be a solution of~\cref{d_dual_ROF}.
Then, for any $n \geq 1$, we have
\begin{equation*}
F(\p^{(n)}) - F(\p^*) \leq \frac{4N_c }{(n+1)^2} \left[ F(\p^{(0)}) - F(\p^*) + 2c_1 \right],
\end{equation*}
where $c_1$ is given in~\cref{c1}.
\end{theorem}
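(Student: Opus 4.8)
The plan is to mimic the standard FISTA argument (Theorem~4.4 in~\cite{BT:2009}), using \cref{Lem:PJ_FB} as the fundamental forward-backward inequality in place of the usual one~\cref{FB_fund}. The key observation is that \cref{Lem:PJ_FB} is \emph{exactly} of the form~\cref{FB_fund} with the constant $L$ replaced by $N_c$ and the squared Euclidean norm $\|\cdot\|^2$ replaced by $2D(\cdot,\cdot)=d(\cdot,\cdot)^2$, where $d$ is the pseudometric introduced before \cref{Lem:PJ_ISTA}. Since the FISTA convergence proof of~\cite{BT:2009} only uses the fundamental lemma together with purely algebraic manipulations of the momentum parameters $t_n$ (and never the specific structure of the Euclidean norm beyond the parallelogram/bilinearity identities, all of which hold for $D$ because $D(\p,\q)=\frac12\sum_k\|\div R_k^*R_k(\p-\q)\|_2^2$ is a quadratic form in $\p-\q$), the entire argument transfers verbatim.

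Concretely, I would first introduce the auxiliary quantities $v_n = F(\p^{(n)})-F(\p^*)$ and, following~\cite{BT:2009}, the combination $u^{(n)} = t_{n-1}\p^{(n)} - (t_{n-1}-1)\p^{(n-1)} - \p^*$. Applying \cref{Lem:PJ_FB} once with $\p=\p^{(n)}$, $\q=\q^{(n)}$ and once with $\p=\p^*$, $\q=\q^{(n)}$, then forming the convex combination weighted by $t_n$ exactly as in~\cite{BT:2009}, yields the one-step recursion
\begin{equation*}
t_n^2 v_{n+1} - t_{n+1}^2 v_{n+2} \leq \tfrac{N_c}{2}\left( d(u^{(n+1)},\mathbf{0})^2 - d(u^{(n+2)},\mathbf{0})^2 \right)
\end{equation*}
(up to the usual indexing), where I use the recurrence $t_{n+1}^2 - t_{n+1} = t_n^2$ built into \cref{Alg:FPJ}. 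Summing this telescoping inequality over the iterations and using $t_n \geq (n+1)/2$ gives
\begin{equation*}
F(\p^{(n)}) - F(\p^*) = v_n \leq \frac{2N_c\, D(\p^*,\p^{(0)})}{(n+1)^2},
\end{equation*}
since $\p^{(0)}=\q^{(0)}$ forces the initial momentum term to vanish and $d(\cdot,\mathbf{0})^2 = 2D(\cdot,\mathbf{0})$. Finally, applying \cref{Cor:D} to bound $D(\p^*,\p^{(0)}) \leq F(\p^{(0)}) - F(\p^*) + 2c_1$ produces the stated estimate, the extra factor of $2$ in the numerator being absorbed into the $4N_c$ coefficient.

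The one point requiring genuine care — rather than mechanical translation — is verifying that every step of the FISTA telescoping argument remains valid when $\|\cdot\|^2$ is replaced by the \emph{degenerate} quadratic form $2D(\cdot,\cdot)$. Because $D$ is only a pseudometric (the seminorm $\p\mapsto\|\div\p\|$ associated with $d$ has a nontrivial kernel, as $\div$ is not injective), I must check that the algebraic identities used to combine the two instances of \cref{Lem:PJ_FB} rely solely on bilinearity and the polarization identity for the quadratic form $D(\p,\q)=\frac12\langle A(\p-\q),\p-\q\rangle$ with $A$ as in~\cref{A_k}, and never on positive-definiteness. They do: the derivation in~\cite{BT:2009} manipulates inner-product expansions of $\|t_n\p^{(n+1)} - (t_n-1)\p^{(n)} - \p^*\|^2$, which are valid for any positive \emph{semi}definite form. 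I expect this to be the main (and essentially only) obstacle, and it is readily dispatched by noting that $D(\p,\q)=\frac12 d(\p,\q)^2$ with $d$ induced by the seminorm $\p \mapsto (\sum_k \|\div R_k^* R_k \p\|_2^2)^{1/2}$, so that all the required convexity and expansion identities hold. Once this is observed, the theorem follows by direct appeal to the proof of Theorem~4.4 in~\cite{BT:2009} with the substitutions $L \leftarrow N_c$ and $\|\cdot\| \leftarrow d(\cdot,\mathbf{0})$.
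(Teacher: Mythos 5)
Your proposal is correct and follows essentially the same route as the paper: both invoke the FISTA argument (Theorem~4.4 of~\cite{BT:2009}) with $L$ replaced by $N_c$ and $\|\cdot\|$ by the pseudometric $d$, using \cref{Lem:PJ_FB} as the fundamental inequality, and then apply \cref{Cor:D} to bound $D(\p^*,\p^{(0)})$. The only quibble is your intermediate constant: since $d(\cdot,\cdot)^2 = 2D(\cdot,\cdot)$, the FISTA bound $2L\|x_0-x^*\|^2/(n+1)^2$ translates to $4N_c D(\p^*,\p^{(0)})/(n+1)^2$ rather than $2N_c D(\p^*,\p^{(0)})/(n+1)^2$ --- a factor-of-two bookkeeping slip that does not affect the stated conclusion.
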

\begin{proof}
With the same argument as in Theorem~4.4 of~\cite{BT:2009}, we obtain
\begin{equation}
\label{FPJ_rate}
F(\p^{(n)}) - F(\p^*) \leq \frac{4N_c D(\p^* , \p^{(0)}) }{(n+1)^2}.
\end{equation}
Then, application of \cref{Cor:D} to~\cref{FPJ_rate} completes the proof.
\end{proof}

% Remark: Convergence analyses are independent of alpha, f
\begin{remark}
\label{Rem:f_alpha}
In \cref{Thm:FPJ}, the $O(1/n^2)$ convergence is satisfied regardless of $f$ and $\alpha$ since they only affect on $F$.
That is, neither the noise level of a given image nor the value of the weight parameter $\alpha$ interferes with the $O(1/n^2)$ convergence of \cref{Alg:FPJ}.
Similar arguments apply to \cref{Alg:RJ,Alg:PJ}.
\end{remark}

% Section: Local problems
\section{Local problems}
\label{Sec:Local}

% Figure: Domain decomposition
\begin{figure}[]
\centering
\subfloat[][Window-shaped DD,\\ \centering$\N = 4\times4$]{ \includegraphics[height=4.5cm]{./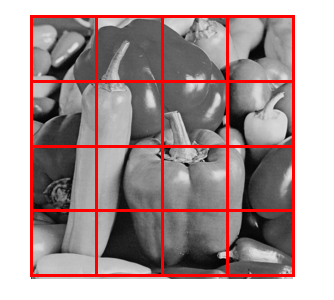} }
\quad
\subfloat[][Stripe-shaped DD,\\ \centering$\N = 6$] { \includegraphics[height=4.5cm]{./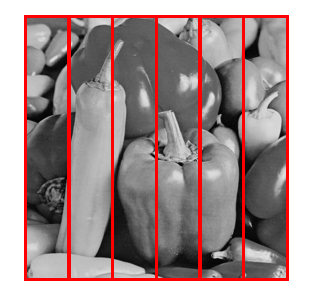} }
\caption{Two different shapes of domain decomposition}
\label{Fig:DD}
\end{figure}

In this section, we discuss how to deal with local problems of the block Jacobi methods introduced in this paper.
The coloring technique used in the proposed methods is also considered.

First, we assume that the domain decomposition $\left\{ \Omega_s \right\}_{s=1}^{\N}$ is of the window shape as shown in \cref{Fig:DD}(a).
All subdomains $\Omega_s$ have the same size.
Let $\Omega_{ij}$ be the subdomain on the~$i$-th row and the~$j$-th column.
We define the local function space $W^s$ as the collection of functions from~$\Omega_s$ to~$\mathbb{R}^2$.
Also, we define the restriction operator $R^s$:~$W \rightarrow W^s$ similarly to~\cref{R_k} and~$C^s = R^s C$.

Local problems in $\Omega_s$ have the following general form:
\begin{equation}
\label{local}
\min_{\p_s \in W^s} \frac{1}{2} \| \div (R^s)^* \p_s + g \|_2^2 + \chi_{C^s} (\p_s)
\end{equation}
for some $g \in V$.
Let $\tOmega_s$ be the subset of $\Omega$ containing $\Omega_s$ and its adjacent lines of pixels on the bottom and the right sides; see \cref{Fig:subdomain}(a).
Also, we define~$\tilde{V}^s$ as the collection of functions from $\tOmega_s$ to~$\mathbb{R}$.
One can observe that $\div (R^s)^* \p_s \in \tilde{V}_s$ for all $\p_s \in W^s$.
Thus, it is natural to define the local divergence operator $\div_s$:~$W^s \rightarrow \tilde{V}^s$ by
\begin{equation}
\label{div_s}
\div_s \p_s = \div (R^s)^* \p_s, \hspace{0.5cm} \p_s \in W^s,
\end{equation}
and the local gradient operator $\nabla_s$:~$\tilde{V}^s \rightarrow W^s$ by $\nabla_s = - \div_s^*$.
With these local operators, it is straightforward to obtain the equivalent primal and primal-dual form of~\cref{local} as follows~(see, for example,~\cite{Rockafellar:2015} for details):
\begin{subequations}
\begin{equation}
\label{local_primal}
\min_{\tilde{u}_s \in \tilde{V}^s} \left\{ \frac{1}{2} \| \tilde{u}_s - g \|_2^2 + \| \nabla_s \tilde{u}_s \|_1 \right\},
\end{equation}
\begin{equation}
\label{local_pd}
\min_{\tilde{u}_s \in \tilde{V}^s} \max_{\p_s \in W^s} \left\{ \left< \nabla_s \tilde{u}_s , \p_s \right> + \frac{1}{2} \| \tilde{u}_s - g \|_2^2 - \chi_{C^s} (\p_s) \right\}. 
\end{equation}
\end{subequations}
Consequently, existing state-of-the-art solvers for the ROF model can be adopted for local problems of the proposed methods.
For instance, one may solve~\cref{local} by FISTA~\cite{BT:2009},~\cref{local_primal} by the augmented Lagrangian method~\cite{WT:2010}, or~\cref{local_pd} by the first order primal-dual algorithm~\cite{CP:2011}.

% Figure: Dependency
\begin{figure}[]
\centering
\includegraphics[height=4.6cm]{./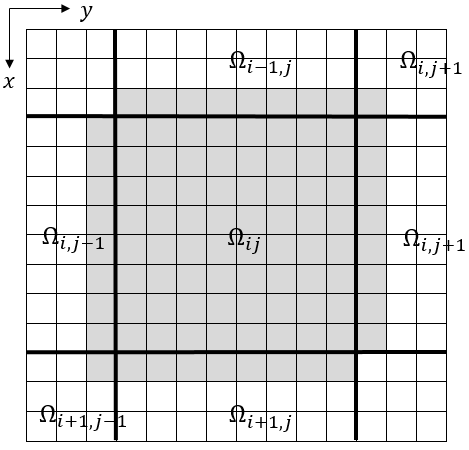}
\caption{Local problem~\cref{local} is dependent on the degrees of freedom in the marked area.}
\label{Fig:dependency}
\end{figure}

% Figure: Coloring technique
\begin{figure}[]
\centering
\includegraphics[height=4.2cm]{./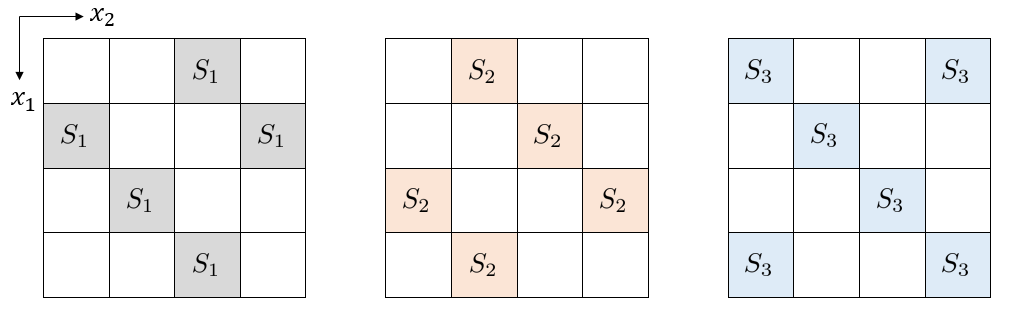}
\caption{Domain decomposition with the coloring technique, $N_c = 3$}
\label{Fig:coloring}
\end{figure}

Next, we consider the coloring technique used in the proposed methods.
As we mentioned above, we assign the same color to subdomains if local problems associated to them are solved independently.
To compute~$g \in \tilde{V}^s$ in~\cref{local}, we require the values of~$\q \in W$ in the area marked in \cref{Fig:dependency}, i.e.,~$\tOmega_s$ plus its adjacent lines of pixels on the top and the left sides.
It means that the local problem~\cref{local} on~$\Omega_{ij}$ is dependent on~$\Omega_{i-1,j}$, $\Omega_{i-1,j+1}$, $\Omega_{i,j-1}$, $\Omega_{i,j+1}$, $\Omega_{i+1,j-1}$, and~$\Omega_{i+1, j}$.
Only~$\Omega_{i-1, j-1}$ and~$\Omega_{i+1,j+1}$ can have the same color as~$\Omega_{ij}$ among its neighboring subdomains.
It can be accomplished with 3 colors; each subdomain~$\Omega_{ij}$ is colored with the color~$(i-j) \textrm{ mod }3$.
See \cref{Fig:coloring} for visual description.
We notice that the same coloring technique was introduced in~\cite{LN:2017} for primal DDMs.

We conclude the section with a remark on domain decomposition shapes.
Since DDMs are designed for use on distributed memory computers, it is important to reduce the amount of communications among processors.
In this sense, the window-shaped domain decomposition shown in \cref{Fig:DD}(a) is most efficient because the total length of the subdomain interfaces is minimized.
Moreover, we observed in \cref{interface} that the convergence rate of block Jacobi methods rely on the length of the subdomain interfaces.
However, in the case when we have only a few number of processors, the stripe-shaped domain decomposition shown in \cref{Fig:DD}(b) can be a good alternative.
Since the stripe-shaped domain decomposition can be colored with only~2 colors, one may expect better convergence rate; recall that all the convergence theorems presented in this paper are dependent on the number of colors~$N_c$.
Numerical comparison between two types of domain decomposition will be presented in the next section.
 
% Section: Numerical experiments
\section{Numerical experiments}
\label{Sec:Numerical}

% Figure: Test images
\begin{figure}[]
\centering
\subfloat[][Peppers $512 \times 512$]{ \includegraphics[height=4.5cm]{./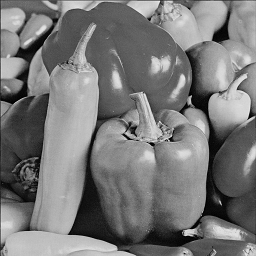} }
\quad
\subfloat[][Boat $2048 \times 3072$] { \includegraphics[height=4.5cm]{./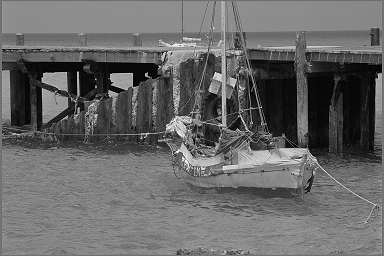} } \\
\subfloat[][Corrupted Peppers \\ \centering (PSNR: 19.11)]{ \includegraphics[height=4.5cm]{./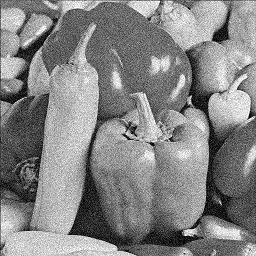} }
\quad
\subfloat[][Corrupted Boat (PSNR: 19.09)] { \includegraphics[height=4.5cm]{./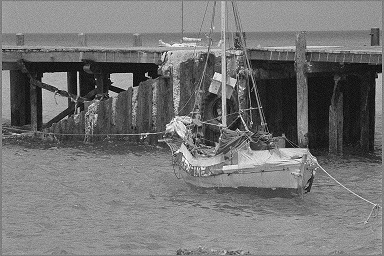} }
\caption{Test images for the numerical experiments}
\label{Fig:test}
\end{figure}

In this section, we present numerical results for the block Jacobi methods introduced in this paper.
All codes were programmed using ANSI C with OpenMPI and compiled by Intel Parallel Studio XE. All computations were performed on a computer cluster composed of seven machines, where each machine is equipped with two Intel Xeon SP-6148 CPUs~(2.4GHz, 20C), 192GB RAM, and the operating system CentOS~7.4 64bit.
Two test images ``Peppers~$512\times 512$'' and ``Boat~$2048 \times 3072$'' are used in our numerical experiments.
Both are corrupted with additive Gaussian noise with mean~$0$ and variance~$0.05$; see \cref{Fig:test}.
As a measurement of the quality of denoising, we use the peak-signal-to-noise ratio~(PSNR) defined by
\begin{equation*}
\mathrm{PSNR} = 10 \log_{10} \left( \frac{\mathrm{MAX}^2 \cdot |\Omega|}{ \| u - f_{\textrm{orig}} \|_2^2 }\right),
\end{equation*}
where $\mathrm{MAX}=1$ is the possible maximum value of pixel intensity and $f_{\textrm{orig}}$ is the original clean image.
The weight parameter~$\alpha$ in~\cref{d_dual_ROF} is chosen as~5, 10, and 20 in our experiments.

For all experiments, we set the initial guess as~$\p^{(0)} = \mathbf{0}$.
To reduce the time elapsed in solving local problems, the local solutions~$\p_s^{(n-1)}$ from the previous iteration are chosen as initial guesses for the local problems at each iteration.

% Figure: Energy decay of the block Jacobi methods
\begin{figure}[]
\centering
\subfloat[][Peppers $512 \times 512$, $\alpha = 5$]{ \includegraphics[height=4.7cm]{./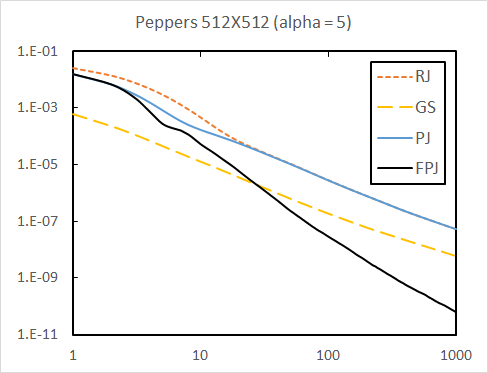} }
\quad\quad
\subfloat[][Boat $2048 \times 3072$, $\alpha = 5$] { \includegraphics[height=4.7cm]{./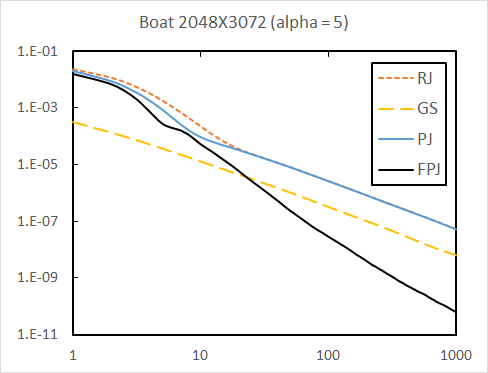} } \\

\subfloat[][Peppers $512 \times 512$, $\alpha = 10$]{ \includegraphics[height=4.7cm]{./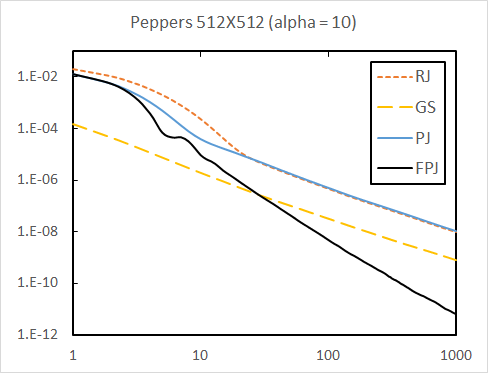} }
\quad\quad
\subfloat[][Boat $2048 \times 3072$, $\alpha = 10$] { \includegraphics[height=4.7cm]{./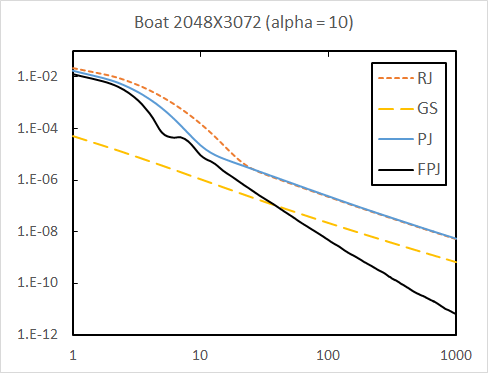} } \\

\subfloat[][Peppers $512 \times 512$, $\alpha = 20$]{ \includegraphics[height=4.7cm]{./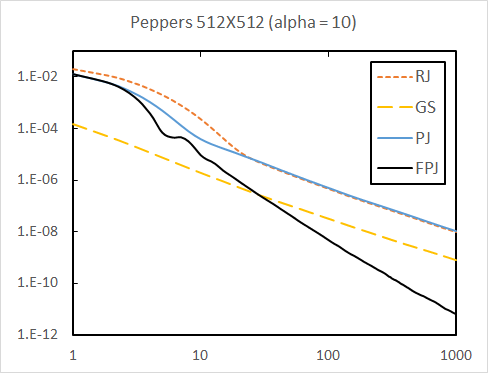} }
\quad\quad
\subfloat[][Boat $2048 \times 3072$, $\alpha = 20$] { \includegraphics[height=4.7cm]{./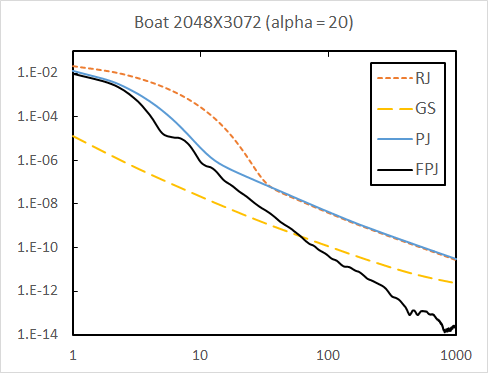} }
\caption{Decay of $\frac{F(\p^{(n)}) - F(\p^*)}{F(\p^*)}$ in several block methods~($\N = 8 \times 8$)}
\label{Fig:energy_J}
\end{figure}

% Figure: Results of the block Jacobi methods
\begin{figure}[]
\centering
\subfloat[][RJ (PSNR: 24.55)]{ \includegraphics[width=4.5cm]{./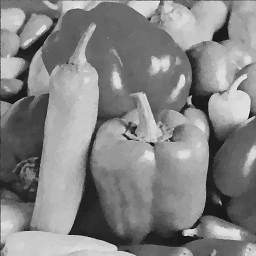} }
\quad
\subfloat[][PJ (PSNR: 24.55)] { \includegraphics[width=4.5cm]{./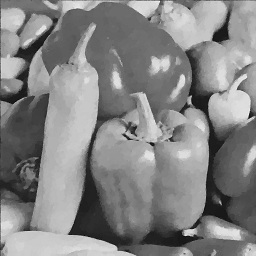} }
\quad
\subfloat[][FPJ (PSNR: 24.55)] { \includegraphics[width=4.5cm]{./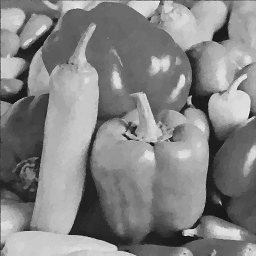} }
\\
\subfloat[][RJ (PSNR: 24.86)]{ \includegraphics[width=4.5cm]{./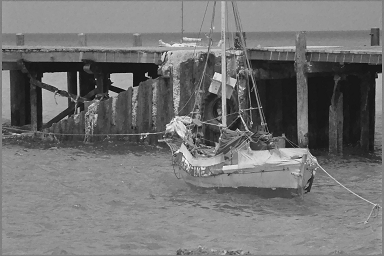} }
\quad
\subfloat[][PJ (PSNR: 24.86)] { \includegraphics[width=4.5cm]{./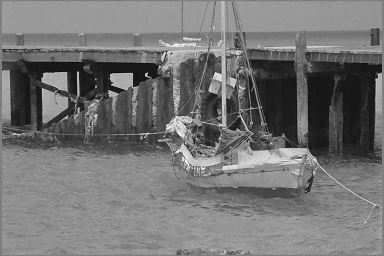} }
\quad
\subfloat[][FPJ (PSNR: 24.86)] { \includegraphics[width=4.5cm]{./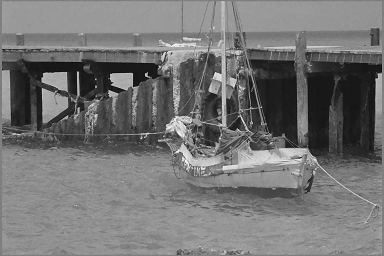} }
\caption{Results of the block Jacobi methods~($\N = 8 \times 8$, $\alpha = 10$)}
\label{Fig:J}
\end{figure}

First, we compare the energy decay of several block methods: relaxed block Jacobi~(\cref{Alg:RJ}, RJ), block Gauss--Seidel~(Algorithm~1 in~\cite{HL:2015} , GS), pre-relaxed block Jacobi~(\cref{Alg:PJ}, PJ), and fast pre-relaxed block Jacobi~(\cref{Alg:FPJ}, FPJ) methods.
\Cref{Fig:energy_J} plots $\frac{F(\p^{(n)}) - F(\p^*)}{F(\p^*)}$ of the block methods per iteration in $\log$-$\log$ scale, where $\p^*$ is a solution of the full-dimension problem~\cref{d_dual_ROF} obtained by $10^6$ FISTA~\cite{BT:2009} iterations.
The number of subdomains~$\N$ is fixed as~$8 \times 8$.
Local problems were solved by FISTA with the stop criterion
\begin{equation*}
\frac{\| \div_s \p_s^{(n+1)} - \div_s \p_s^{(n)} \|_2}{\| \div_s \p_s^{(n+1)} \|_2 } < 10^{-9} \quad\textrm{or}\quad
n = 50,
\end{equation*}
where the operator $\div_s$ was defined in~\cref{div_s}.
The above criterion was chosen so that stagnations of the energy do not occur~(see Figure~5 in~\cite{LPP:2019}).
As we justified theoretically in \cref{Sec:Jacobi}, the energy functional of FPJ converges to the minimum much faster than other methods.
As noted in \Cref{Rem:f_alpha}, such fast convergence occurs regardless of the given image $f$ and the weight parameter $\alpha$.
In particular, the convergence rate of FPJ is faster than that of GS, while other block Jacobi methods converge more slowly than GS.
We notice that the $O(1/n)$ convergence of GS can be deduced as a direct consequence of~\cite{CP:2015,ST:2016}.
It is observed that the energy decay of FPJ is not monotone.
This is due to the nature of FISTA, and one may use MFISTA~\cite{BT:2009b} instead of FISTA acceleration to ensure monotone decay of the energy.
Note that the monotonicity of MFISTA given in Appendix~A of~\cite{BT:2009b} can be applied to our framework directly.
Interestingly, as we mentioned in \cref{Rem:const}, PJ seems to be faster than RJ at first 20 iterations, but eventually shows similar convergence results compared to RJ in our numerical experiments.
Meanwhile, as shown in \cref{Fig:J}, all the results from three block Jacobi methods with $\alpha = 10$ have the same PSNR and are not visually distinguishable.
The same applies to the cases $\alpha = 5, 20$, and we omit the results for those cases.

% Figure: Results of the block Jacobi methods w.r.t. alpha
\begin{figure}[]
\centering
\subfloat[][$\alpha=5$ (PSNR: 23.91)]{ \includegraphics[width=4.5cm]{./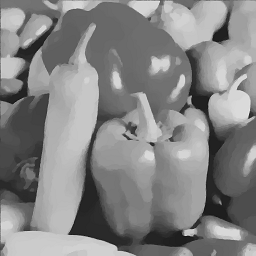} }
\quad
\subfloat[][$\alpha=10$ (PSNR: 24.55)] { \includegraphics[width=4.5cm]{./peppers_FPJ_10.png} }
\quad
\subfloat[][$\alpha=20$ (PSNR: 23.88)] { \includegraphics[width=4.5cm]{./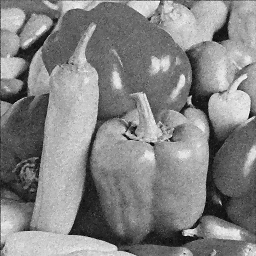} }
\\
\subfloat[][$\alpha=5$ (PSNR: 24.12)]{ \includegraphics[width=4.5cm]{./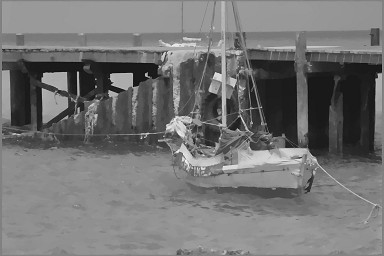} }
\quad
\subfloat[][$\alpha=10$ (PSNR: 24.86)] { \includegraphics[width=4.5cm]{./boat_FPJ_10.png} }
\quad
\subfloat[][$\alpha=20$ (PSNR: 24.27)] { \includegraphics[width=4.5cm]{./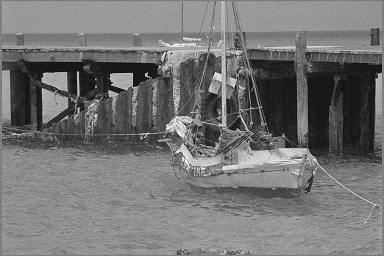} }
\caption{Results of the fast pre-relaxed block Jacobi method with various values of $\alpha$ ($\N = 8 \times 8$)}
\label{Fig:FPJ_alpha}
\end{figure}

% Table: Performance of the block Jacobi method w.r.t. alpha
\begin{table}
\centering
\begin{tabular}{| c | c | c c c |} \hline
Test image & $\alpha$  & PSNR& iter & \begin{tabular}{c}wall-clock\\time (sec) \end{tabular}\\
\hline
\multirow{3}{*}{\shortstack{Peppers \\ $512 \times 512$}}
& 5 & 23.88& 26 & 0.34 \\ 
& 10 & 24.55 & 11 & 0.15 \\
& 20 & 23.91 & 8 & 0.12 \\
\hline
\multirow{3}{*}{\shortstack{Boat \\ $2048 \times 3072$}}
& 5 & 24.27 & 94 & 46.23 \\ 
& 10 & 24.86 & 10 & 7.08 \\
& 20 & 24.12 & 6 & 4.64\\
\hline
\end{tabular}
\caption{Performance of the fast pre-relaxed block Jacobi method with various values of $\alpha$ ($\N = 8 \times 8$)}
\label{Table:FPJ_alpha}
\end{table}

For practical use in image restoration, the stop criteria need not be too strict; see~\cite{CTWY:2015} for details.
In the following experiments, we use
\begin{equation}
\label{stop_cr}
\frac{F(\p^{(n)}) - F(\p^*)}{F(\p^*)} < 10^{-5}
\end{equation}
for outer iterations, and
\begin{equation*}
\frac{\| \div_s \p_s^{(n+1)} - \div_s \p_s^{(n)} \|_2}{\| \div_s \p_s^{(n+1)} \|_2 } < 10^{-4} \quad\textrm{or}\quad
n = 50,
\end{equation*}
for local problems.

We compare the results of FPJ with respect to the weight parameter~$\alpha$.
\Cref{Fig:FPJ_alpha} shows the results of FPJ with $\alpha=5, 10,$ and~$20$ with $\N = 8 \times 8$.
As it is well-known for the ROF model, the staircase effect becomes strong as $\alpha$ decreases.
Among three results, $\alpha = 10$ gives the highest PSNR.
We observe from \cref{Table:FPJ_alpha} that both the number of FPJ iterations and the wall-clock time increase as $\alpha$ decreases.

% Table: Performance for the block Jacobi method w.r.t. \N
\begin{table}
\centering
\begin{tabular}{| c | c | c c c |} \hline
Test image & $\N$ & PSNR & iter & \begin{tabular}{c}wall-clock\\time (sec)\end{tabular} \\
\hline
\multirow{4}{*}{\shortstack{\begin{phantom}1\end{phantom} \\ Peppers \\ $512 \times 512$}}
& 1 & 24.55 & - & 0.71 \\ \cline{2-5}
& $2 \times 2$ & 24.55 & 9 &  0.70 \\
& $4 \times 4$ & 24.55 & 10 & 0.23 \\
& $8 \times 8$ & 24.55 & 11 &  0.15 \\
& $16 \times 16$ & 24.55 & 14 & 0.20 \\
\hline
\multirow{4}{*}{\shortstack{\begin{phantom}1\end{phantom} \\ Boat \\ $2048 \times 3072$}}
& 1 & 24.86 & - & 53.24 \\ \cline{2-5}
& $2 \times 2$ & 24.86 & 10 & 50.14 \\
& $4 \times 4$ & 24.86 & 10 & 14.47 \\
& $8 \times 8$ & 24.86 & 10 & 7.08 \\
& $16 \times 16$ & 24.86 & 10 & 4.70 \\
\hline
\end{tabular}
\caption{Performance of the fast pre-relaxed block Jacobi method with various numbers of subdomains~$\N$~($\alpha = 10$)}
\label{Table:FPJ_N}
\end{table}

% Figure: 16 X 16 subdomains results
\begin{figure}[]
\centering
\subfloat[][PSNR: 24.55]{ \includegraphics[height=4.5cm]{./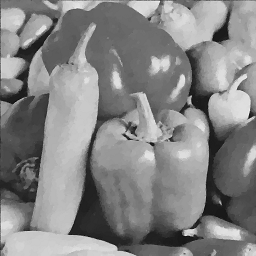} }
\quad
\subfloat[][PSNR: 24.86] { \includegraphics[height=4.5cm]{./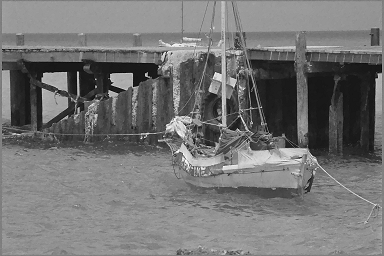} } 
\caption{Results of the fast pre-relaxed block Jacobi method~($\N = 16\times 16$, $\alpha = 10$)}
\label{Fig:FPJ_N}
\end{figure} 

\Cref{Table:FPJ_N} shows the performance of FPJ with various numbers of subdomains~$\N$ when $\alpha = 10$.
The case~$\N = 1$ presents the results of FISTA applied to the full-dimension problem for~\cref{d_dual_ROF} using the stopping criterion~\cref{stop_cr}.
In the case of the test image ``Peppers $512 \times 512$,'' the wall-clock time of FPJ decreases as $\N$ grows from 1 to $8 \times 8$,  but increases when $\N$ becomes $16 \times 16$.
It is because the size of ``Peppers $512 \times 512$'' is so small that the portion of communication time between processors in the wall-clock time is not negligible compared to computation time if we use $\N = 16 \times 16$ subdomains.
For a sufficiently large test image ``Boat $2048 \times 3072$,'' the wall-clock time of FPJ decreases as~$\N$ increases.
It shows the efficiency of FPJ as a parallel solver for large-scale images.
We also observe numerically that the number of FPJ iterations increases as $\N$ grows.
\Cref{Fig:FPJ_N} shows the denoised test images by FPJ with~$\N = 16 \times 16$.
As they show no trace of the subdomain interfaces and have the same PSNR as the case~$\N = 1$, we can say that FPJ correctly solves~\cref{d_dual_ROF} and recovers images.

% Figure: Window DD vs Stripe DD
\begin{figure}[]
\centering
\subfloat[][Peppers, 4 subdomains]{ \includegraphics[height=4.7cm]{./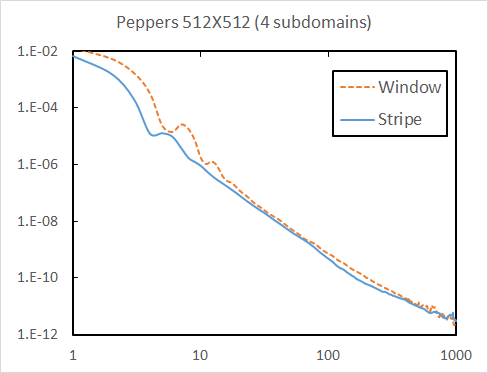} }
\quad\quad
\subfloat[][Boat, 4 subdomains] { \includegraphics[height=4.7cm]{./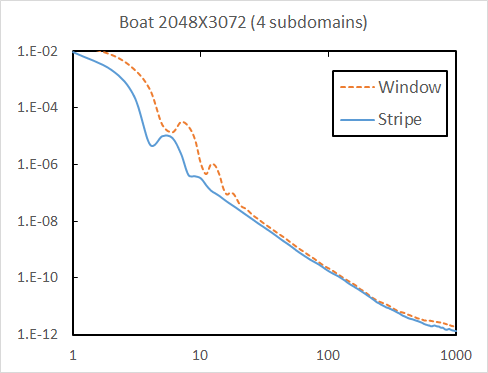} } \\
\subfloat[][Peppers, 256 subdomains]{ \includegraphics[height=4.7cm]{./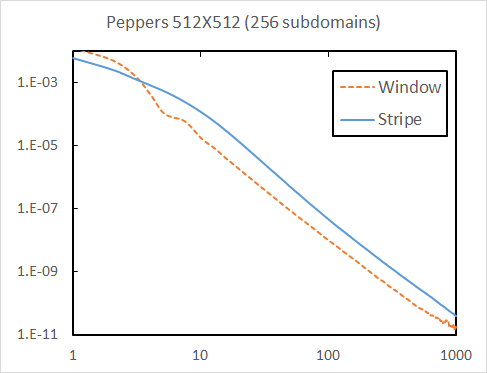} }
\quad\quad
\subfloat[][Boat, 256 subdomains] { \includegraphics[height=4.7cm]{./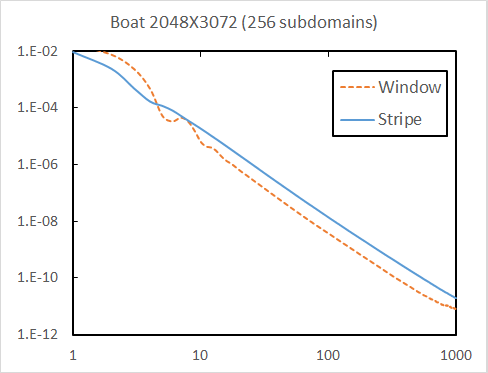} }
\caption{Decay of $\frac{F(\p^{(n)}) - F(\p^*)}{F(\p^*)}$ in the fast pre-relaxed block Jacobi method with window/stripe-shaped domain decompositions~($\alpha = 10$)}
\label{Fig:energy_DD}
\end{figure}

% Table: Performance for the block Jacobi method w.r.t. DD shape
\begin{table}
\centering
\begin{tabular}{| c | c | c c c |} \hline
Test image & $\N$ & PSNR & iter & \begin{tabular}{c}wall-clock\\time (sec)\end{tabular} \\
\hline
\multirow{4}{*}{\shortstack{\begin{phantom}1\end{phantom} \\ Peppers \\ $512 \times 512$}}
& 1 & 24.55 & - & 0.71 \\ \cline{2-5}
& 4 & 24.55 & 7 &  0.57 \\
& 16 & 24.55 & 8 & 0.22 \\
& 64 & 24.55 & 13 &  0.19 \\
& 256 & 24.55 & 23 & 1.05 \\
\hline
\multirow{4}{*}{\shortstack{\begin{phantom}1\end{phantom} \\ Boat \\ $2048 \times 3072$}}
& 1 & 24.86 & - & 53.24 \\ \cline{2-5}
& 4 & 24.86 & 7 & 37.54 \\
&16 & 24.86 & 7 & 10.53 \\
&64 & 24.86 & 9 & 6.86 \\
& 256 & 24.86 & 14 & 7.01 \\
\hline
\end{tabular}
\caption{Performance of the fast pre-relaxed block Jacobi method with various numbers of subdomains~$\N$, the stripe-shaped case~($\alpha = 10$)}
\label{Table:FPJ_stripe}
\end{table}

Finally, we compare the convergence rate of FPJ for the window-shaped domain decomposition and the stripe-shaped one.
\Cref{Fig:energy_DD} shows the decay of $\frac{F(\p^{(n)}) - F(\p^*)}{F(\p^*)}$ for both domain decompositions when $\N = 4$ and~$256$.
Since $N_c = 2$ for the stripe shape and $N_c = 3$ for the window shape, the stripe-shaped domain decomposition converges faster than the window-shaped one when $\N = 4$.
However, since the total lengths of the subdomain interfaces are $O(\N^{1/2})$ and $O(\N )$ for the window and stripe shape, respectively, as we observed in \cref{Rem:interface}, the window-shaped domain decomposition is faster than the stripe-shaped one when $\N$ is large enough; see \cref{Fig:energy_DD}(c) and~(d).
\Cref{Table:FPJ_stripe} provides the performance of FPJ with the stripe-shaped domain decomposition with various values of $\N$ when $\alpha = 10$.
Comparing \cref{Table:FPJ_N,Table:FPJ_stripe}, it can be inferred that if $\N$ is small, the stripe-shaped domain decomposition is more efficient than the window-shaped one, and if $\N$ is large enough, the opposite holds.

% Section: Conclusion
\section{Conclusion}
\label{Sec:Conclusion}
We proved the~$O(1/n)$ convergence rate of the nonoverlapping relaxed block Jacobi method for the dual ROF model proposed in~\cite{HL:2015}.
Then, incorporating the FISTA acceleration technique~\cite{BT:2009} into this method, we proposed the~$O(1/n^2)$ convergent fast pre-relaxed block Jacobi method.
We implemented various block decomposition methods and showed that the fast pre-relaxed block Jacobi method outperforms others numerically.
The fast pre-relaxed block Jacobi method also showed good performance as a parallel solver for large-scale images.

Since our convergence analysis cannot be applied directly to overlapping cases, we plan to investigate ways to extend our acceleration methodology to overlapping methods~\cite{CTWY:2015} in our future work.
In addition, to design scalable DDMs, construction of the coarse grid correction will be considered.

\bibliographystyle{siamplain}
\bibliography{bibliography}
\end{document}